  \mathchardef\standardeq=\mathcode`=
  \mathchardef\standardless=\mathcode`<
  \mathchardef\standardgreater=\mathcode`>
\setlist{nosep}
\def\figdir{Pictures/}
\newcommand{\newautotheorem}[3] 
{
\newaliascnt{#1}{#2}
\newtheorem{#1}[#1]{#3}
\aliascntresetthe{#1}
\expandafter\def\csname #1autorefname\endcsname{%
#3%
}%
}
\newtheorem{theorem}{Theorem}
\theoremstyle{definition}
\newcommand{\KronDelta}{\updelta} 
\newcommand{\Rog}{\operatorname{R}_{\mathrm{g}}^2}
\newcommand{\R}{\mathbb{R}}
\newcommand{\Var}{\operatorname{Var}}
\newcommand{\graphG}{\mathbf{G}}
\newcommand{\edgesE}{\mathbf{e}}
\newcommand{\verticesV}{\mathbf{v}}
\newcommand{\edge}{e}
\newcommand{\vertex}{v}
\newcommand{\head}{\operatorname{head}}
\newcommand{\tail}{\operatorname{tail}}
\newcommand{\ceq}{\coloneqq}
\def\co{\colon\thinspace}
\let\mgp=\marginpar \marginparwidth18mm \marginparsep1mm
\def\marginpar#1{\mgp{\raggedright\tiny #1}}
\let\lbl=\label
\def\label#1{\lbl{#1}\ifinner\else\marginpar{\ref{#1} #1}\ignorespaces\fi}
\newcommand{\edgegroup}{\mathcal{E}}
\newcommand{\vertexgroup}{\mathcal{V}}
\newcommand{\positiongroup}{X}
\newcommand{\displacementgroup}{W}
\newcommand{\comcloud}{M}
\newcommand{\parentcloud}{P}
\DeclarePairedDelimiterXPP{\pars}[1]{\mathop{}}{\lparen}{\rparen}{}{#1}
\DeclarePairedDelimiterXPP{\abs}[1]{\mathop{}}{\lvert}{\rvert}{}{#1}
\DeclarePairedDelimiterXPP{\norm}[1]{\mathop{}}{\lVert}{\rVert}{}{#1}
\DeclarePairedDelimiterXPP{\seminorm}[1]{\mathop{}}{\lbrack}{\rbrack}{}{#1}
\DeclarePairedDelimiterXPP{\inner}[1]{\mathop{}}{\langle}{\rangle}{}{#1}
\DeclarePairedDelimiterXPP{\brackets}[1]{\mathop{}}{\lbrack}{\rbrack}{}{#1}
\DeclarePairedDelimiterXPP{\braces}[1]{\mathop{}}{\lbrace}{\rbrace}{}{#1}
\DeclarePairedDelimiterXPP{\intervalcc}[1]{\mathop{}}{\lbrack}{\rbrack}{}{#1}
\DeclarePairedDelimiterXPP{\intervalco}[1]{\mathop{}}{\lbrack}{\rparen}{}{#1}
\DeclarePairedDelimiterXPP{\intervaloc}[1]{\mathop{}}{\lparen}{\rbrack}{}{#1}
\DeclarePairedDelimiterXPP{\intervaloo}[1]{\mathop{}}{\lparen}{\rparen}{}{#1}
\DeclarePairedDelimiterXPP{\set}[2]{\mathop{}}{\lbrace}{\rbrace}{}{#1\,\delimsize\vert\,\mathopen{}#2}
\let\dot\undefined
\DeclarePairedDelimiterXPP{\dot}[2]{\mathop{}}{\langle}{\rangle}{}{#1,#2}
\DeclarePairedDelimiterXPP{\floor}[1]{\mathop{}}{\lfloor}{\rfloor}{}{#1}
\DeclarePairedDelimiterXPP{\ceil}[1]{\mathop{}}{\lceil}{\rceil}{}{#1}
\DeclareDocumentCommand{\converges}{ o }{
	\mathbin{%
		\IfValueTF{#1}{%
			\mathrel{\vbox{\offinterlineskip\ialign{%
				\hfil##\hfil\cr
				$\scriptscriptstyle#1$\cr
				$-\!\!\!-\!\!\!\rightarrow$\cr
			}}}
		}{%
			-\!\!\!-\!\!\!\rightarrow
		}%
	}%
}
\begin{document}
\title{On the average squared radius of gyration \\of a family of embeddings of subdivision graphs}
\author{Jason Cantarella}
\altaffiliation{Mathematics Department, University of Georgia, Athens, GA, USA}
\noaffiliation
\author{Henrik Schumacher}
\altaffiliation{Mathematics Department, University of Georgia, Athens, GA, USA}
\noaffiliation
\author{Clayton Shonkwiler}
\altaffiliation{Department of Mathematics, Colorado State University, Fort Collins, CO, USA}
\noaffiliation

\keywords{radius of gyration, graph embedding, subdivision graph}


\begin{abstract}
Suppose we have an embedding of a graph $\graphG$ created by subdividing the edges of a simpler graph $\graphG'$. The edges of $\graphG$ can be divided into subsets which join pairs of ``junction'' vertices in $\graphG'$. The displacement vectors of the edges in each subset sum to the displacement between junctions. We can construct a family of embeddings of $\graphG$ with the same junction positions by rearranging the displacements in each group. In this paper, we show that the average (squared) radius of gyration of these embeddings is given by a simple formula involving a weighted (squared) radius of gyration of the positions of the junctions and the sum of the squares of the lengths of the edges of $\graphG$ and $\graphG'$. This ensemble of graph embeddings arises naturally in polymer science.
\end{abstract}
\date{\today}
\maketitle

\section{Introduction}
In this paper, we consider some geometric properties of a special family of graph embeddings.\footnote{``Graph embedding'' is a term of art---see, for example, the survey~\cite{GoyalFerrara2018}---which refers to a mapping from the vertex set of a graph to a vector space. This is not necessarily a topological embedding, as there is no assumption that the mapping is injective.} Let $\graphG$ be a directed graph with $\verticesV$ vertices and $\edgesE$ edges. An embedding of $\graphG$ into $\R^d$ is given by a choice of positions $X = (x_1,\dotsc,x_{\verticesV}) \in (\R^d)^\verticesV$ for the vertices of $\graphG$.
These vertex positions (and the directions on the edges) determine edge displacements $W = (w_1, \dotsc, w_{\edgesE}) \in (\R^d)^\edgesE$: if the head and tail of edge $i$ are vertices $j$ and $k$, then $w_{i} = x_j - x_k$.

Suppose that $\graphG$ is a subdivision of some $\graphG'$, which has $\verticesV'$ vertices and $\edgesE'$ edges, so that each edge of $\graphG'$ is subdivided into $n$ pieces as in~\autoref{fig:structure}. We first note that an embedding $X$ of $\graphG$ immediately determines an embedding $X'$ of $\graphG'$. Further, if we divide the edges of $\graphG$ into $\edgesE'$ sets of $n$ edges, denoting the $j$th member of the $i$th group by $w_{i,j}$, then any permutation $\sigma = (\sigma_1,\dotsc,\sigma_{\edgesE'})$, $\sigma_i \in S_{n}$ of the $n \cdot \edgesE'$ displacement vectors in $W$ that preserves each group of $n$ yields an embedding $X^\sigma$ of $\graphG$ which determines the~\emph{same} embedding $X'$ of $\graphG'$, as shown in~\autoref{fig:permutation}. We will denote the group of such permutations by $S$.

\begin{figure}[t]
\hfill
\begin{overpic}[width=0.4\textwidth]{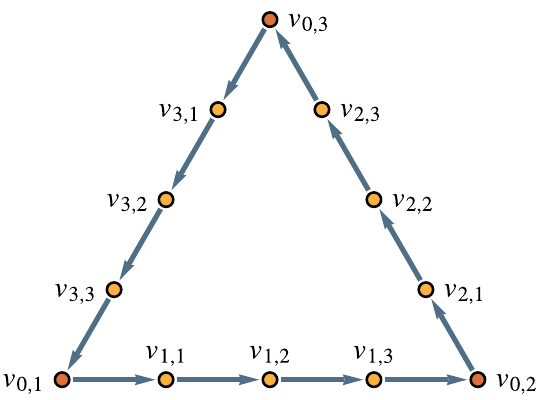}
\end{overpic}
\hfill
\begin{overpic}[width=0.4\textwidth]{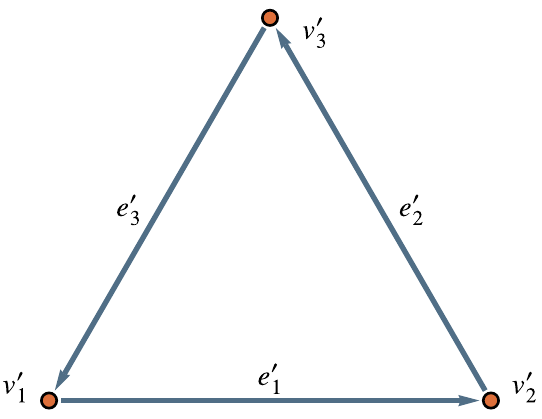}
\end{overpic}
\hfill
\hphantom{.}
\caption{The graph $\graphG$ (at left) is a subdivision of the graph $\graphG'$ at right. The vertices $\vertex_{i,j}$ and edges $\edge_{i,j}$ of $\graphG$ are numbered to correspond with vertices and edges of $\graphG'$; each $\vertex_{0,j}$ corresponds to a vertex $\vertex'_j$ of $\graphG'$, while vertices $\vertex_{i,1}, \dotsc, \vertex_{i,n-1}$ are those created by subdividing edge $\edge'_i$ of $\graphG'$ into $n$ new edges. The edges $\edge_{i,j}$ of $\graphG$ aren't labeled in the picture, but are constructed so that $\edge_{i,1}, \dotsc, \edge_{i,n}$ are the edges created by subdividing $\edge'_i$.}
\label{fig:structure}
\end{figure}

\begin{figure}[t]%
	\includegraphics[width=0.9\textwidth]{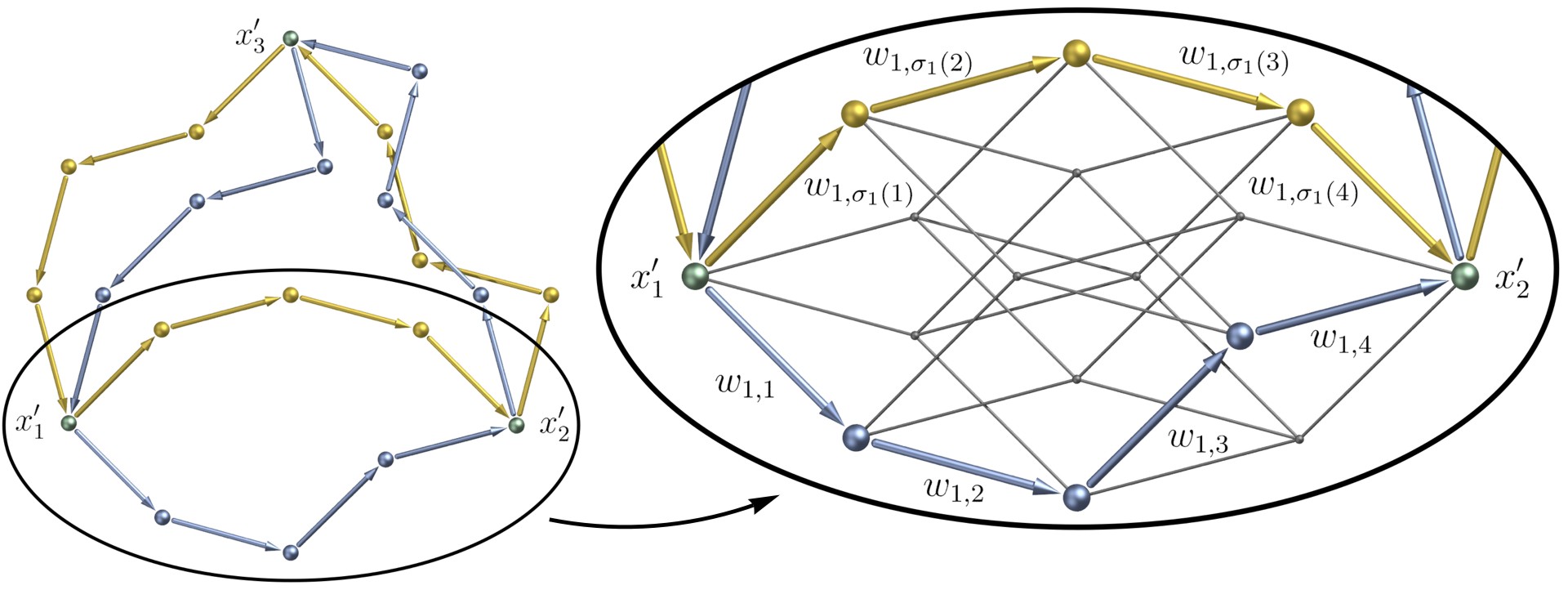}%
	\caption{Above left we see two different embeddings of the cycle graph $\graphG$, which is a subdivision of the triangle graph $\graphG'$ from~\autoref{fig:structure} with $n=4$. The edges of $\graphG$ are divided into 3 groups of $4$ edges, each corresponding to a single edge of $\graphG'$. The yellow and blue embeddings of $\graphG$ are generated by permuting the displacement vectors within each group. As we see in the inset graphic of the bottom arc (at right), there are many (in fact,~$n!$) different paths that $\graphG$ may take along each edge of $\graphG'$. However, the set of vertex positions along these paths is highly structured, and any such permutation gives rise to the same embedding of $\graphG'$.}
	\label{fig:permutation}
\end{figure}

This gives rise to the following question: to what extent is the average geometry of the $X^\sigma$ determined by $X'$? We are particularly interested in the (squared) \emph{radius of gyration}
\[
	\Rog(X)
	\ceq
	\frac{1}{\verticesV} \sum_{i=1}^{\verticesV}
	\norm{x_i - \mu }^2
	=
	\frac{1}{\verticesV} \sum_{i=1}^{\verticesV} \norm{x_i}^2 - \norm{\mu}^2
	,
	\quad \text{where} \quad
	\mu
	\ceq
	\frac{1}{\verticesV} \sum_{j=1}^{\verticesV} x_j
\]
of these embeddings, and prove as our main theorem the appealing formula:

\begin{theorem}
\label{thm:symmetrization formula}
The average radius of gyration
\[
\begin{aligned}
	\frac{1}{\# S} \sum_{\sigma \in S} \Rog(X^\sigma)
	&=
	\Rog\pars[\Big]{ X',\deg + \frac{2}{n-1} }
	+
	\frac{ \pars{n+1} \pars{2 \, \verticesV - n} }{ 12 \, \verticesV^2 } \norm{W}^2
	-
	\frac{ \pars{n+1} \pars{2 \, \verticesV - 1} }{ 12 \, \verticesV^2 } \norm{W'}^2
	,
\end{aligned}
\]
where $\Rog\pars[\big]{X',\deg + \frac{2}{n-1}}$ is a reweighted radius of gyration (see \autoref{defn:radius of gyration} below) where each vertex is weighted by its degree plus $\frac{2}{n-1}$,
$\norm{W}^2 = \sum_{i=1}^{\edgesE'} \sum_{j=1}^{n} \norm{w_{i,j}}^2$
and
$\norm{W'}^2 = \sum_{i=1}^{\edgesE'\!\!} \norm{w'_i}^2$.
\end{theorem}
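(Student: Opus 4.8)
The plan is to center the entire computation on the \emph{straight-line mean embedding} $\bar X$, in which every subdivision vertex is placed at its expected position $\bar x_{i,k} \ceq a_i + \tfrac{k}{n} w'_i$, i.e.\ equally spaced along the segment joining the tail $a_i$ to the head $b_i = a_i + w'_i$ of the edge $\edge'_i$ of $\graphG'$. Writing $x^\sigma_v = \bar x_v + \delta^\sigma_v$, where $\delta^\sigma_v$ has mean zero under the uniform average $\mathbb{E}$ over $\sigma \in S$, and using $\Rog(X) = \tfrac1\verticesV \sum_v \norm{x_v}^2 - \norm{\mu}^2$, the cross terms average away and one obtains the variance (parallel-axis / total-variance) decomposition
\[
	\frac{1}{\# S}\sum_{\sigma\in S}\Rog(X^\sigma) = \Rog(\bar X) + \frac{1}{\verticesV}\sum_v \mathbb{E}\norm{\delta^\sigma_v}^2 - \mathbb{E}\norm{\bar\delta^\sigma}^2,
\]
where $\bar\delta^\sigma = \tfrac1\verticesV \sum_v \delta^\sigma_v$. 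The first term is the gyradius of the mean configuration, the second is the mean per-vertex fluctuation, and the third removes the fluctuation of the centroid $\mu^\sigma$.

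First I would evaluate $\Rog(\bar X)$. Only the equally-spaced points on each edge enter, so the interior contribution to $\sum_v \norm{\bar x_v}^2$ splits into $\norm{a_i}^2$, $\norm{b_i}^2$, and $\dot{a_i}{b_i}$ pieces which, after summing the arithmetic series in $k$, collapse to $\tfrac{n-1}{2}\pars{\norm{a_i}^2 + \norm{b_i}^2} - \tfrac{n^2-1}{6n}\norm{w'_i}^2$. Summing over edges and using the incidence identities $\sum_i (a_i + b_i) = \sum_v \deg(v)\, q_v$ and $\sum_i \pars{\norm{a_i}^2 + \norm{b_i}^2} = \sum_v \deg(v)\norm{q_v}^2$ (with $q_v$ the position of junction $v$ in $X'$) turns the junction weight into $1 + \tfrac{n-1}{2}\deg(v) = \tfrac{n-1}{2}\pars{\deg(v) + \tfrac{2}{n-1}}$; since $\sum_v \pars{\deg(v) + \tfrac{2}{n-1}} = \tfrac{2\verticesV}{n-1}$, both the weighted centroid and the weighted second moment appear with the correct normalization, giving $\Rog(\bar X) = \Rog\pars{X',\deg + \tfrac2{n-1}} - \tfrac{n^2-1}{6n\verticesV}\norm{W'}^2$.

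Next I would compute the two fluctuation terms from the first two moments of the partial sums $S^\sigma_{i,k} = \sum_{j\le k} w_{i,\sigma_i(j)}$ of a uniformly random permutation. Sampling without replacement gives $\mathbb{E}[w_{i,\sigma_i(j)}] = \tfrac1n w'_i$ and, for $j\ne j'$, $\mathbb{E}\dot{w_{i,\sigma_i(j)}}{w_{i,\sigma_i(j')}} = \tfrac{\norm{w'_i}^2 - \norm{W_i}^2}{n(n-1)}$, where $\norm{W_i}^2 \ceq \sum_{j=1}^n \norm{w_{i,j}}^2$. These yield $\Var(S^\sigma_{i,k}) = \tfrac{k(n-k)}{n(n-1)}\pars{\norm{W_i}^2 - \tfrac1n\norm{w'_i}^2}$, whose sum over $k$ is the per-group fluctuation $\tfrac{n+1}{6}\pars{\norm{W_i}^2 - \tfrac1n\norm{w'_i}^2}$. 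For the centroid term I would use that distinct groups are independent, so $\mathbb{E}\norm{\bar\delta^\sigma}^2 = \tfrac1{\verticesV^2}\sum_i \Var\pars{\sum_k S^\sigma_{i,k}}$; writing $\sum_k S^\sigma_{i,k} = \sum_j (n-j)\, w_{i,\sigma_i(j)}$ and applying the same two moments gives $\Var\pars{\sum_k S^\sigma_{i,k}} = \tfrac{n+1}{12}\pars{n\norm{W_i}^2 - \norm{w'_i}^2}$. Summing with $\sum_i \norm{W_i}^2 = \norm{W}^2$ and $\sum_i \norm{w'_i}^2 = \norm{W'}^2$ finishes both fluctuation terms.

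Substituting the three pieces into the decomposition and collecting coefficients gives the theorem. \textbf{The main difficulty is bookkeeping rather than conceptual.} The coefficient of $\norm{W}^2$ assembles cleanly from the per-vertex and centroid terms into $\tfrac{(n+1)(2\verticesV - n)}{12\verticesV^2}$, but the coefficient of $\norm{W'}^2$ receives three separate contributions—the correction in $\Rog(\bar X)$, the $-\tfrac1n$ inside the per-vertex fluctuation, and the centroid term—which must be combined (using $n^2 - 1 = (n-1)(n+1)$) to collapse to the single coefficient $-\tfrac{(n+1)(2\verticesV - 1)}{12\verticesV^2}$. The one genuine subtlety is that the centroid $\mu^\sigma$ is itself random and correlated within each group, so the within-group covariance from sampling without replacement must be tracked carefully rather than treated as if the permuted displacements were independent.
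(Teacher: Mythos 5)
Your proposal is correct; I checked the mean-embedding computation, both sampling-without-replacement variance formulas, and the final assembly of coefficients, and they all hold. But your route is genuinely different from the paper's. The paper decomposes \emph{spatially}: it applies \autoref{lem:splitting formula for Rog} (Eve's law over the vertex groups $\vertexgroup_0, \dotsc, \vertexgroup_{\edgesE'}$) to write $\Rog(X^\sigma)$ as per-group gyradii plus pairwise squared distances between group centroids (\autoref{prop:splitting formula for rog}), then averages each of the four resulting terms over $S$ separately; this requires the indicator-variable combinatorics of \autoref{lem:average radius of gyration of Xi}, the parallelogram/midpoint argument of \autoref{lem:center of mass of center of mass cloud} to locate the mean group centroids, and Euler's quadrilateral law (\autoref{prop:degree rog and rog}) to convert the edge-midpoint cloud into the degree-weighted gyradius. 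You decompose \emph{probabilistically}: the law of total variance over the randomness $\sigma$, centered at the mean embedding $\bar X$, so that the average gyradius splits as $\Rog(\bar X)$ plus mean per-vertex fluctuation minus centroid fluctuation, with the cross terms vanishing exactly because $\bar x_v = \frac{1}{\# S}\sum_{\sigma \in S} x^\sigma_v$. Your route buys directness and brevity: the weight $\deg + \frac{2}{n-1}$ emerges transparently from incidence counting (each junction carries weight $1 + \frac{n-1}{2}\deg$ in $\bar X$, with total weight $\verticesV$), and both fluctuation terms reduce to the textbook covariance structure of sampling without replacement --- you correctly identify and handle the one subtle point, namely the within-group correlations $\mathbb{E}\dot{w_{i,\sigma_i(j)}}{w_{i,\sigma_i(j')}} = \frac{1}{n(n-1)}\pars[\big]{\norm{w'_i}^2 - \sum_{\ell=1}^n \norm{w_{i,\ell}}^2}$ for $j \neq j'$, rather than pretending the permuted displacements are independent. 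What the paper's longer route buys is its intermediate results, which have independent interest: the exact per-group average gyradius of \autoref{lem:average radius of gyration of Xi}, the midpoint lemma (which, as the paper remarks, extends to self-avoiding walks), the pairwise-centroid formula of \autoref{prop:average squared distance between centers of mass of edge groups}, and the identity $\Rog(M') = \Rog(X',\deg) - \frac{1}{4 \, \edgesE'}\norm{W'}^2$, none of which are visible from your argument.
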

These particular ensembles of graph embeddings are motivated by polymer science, where the embeddings $X$ of network polymers are random variables determined by a probability distribution on the edge displacements $W$ (conditioned on membership in the subspace of acceptable displacements). The theory of these~\emph{topological polymers} was first discussed by James, Guth, and Flory~\cite{James1943,James:1947hp,FloryPaulJ1969Smoc} and called \emph{phantom network theory}. Recently, polymers with complicated predetermined topologies have actually been synthesized~\cite{Suzuki:2014fo,Tezuka:2017gh}, leading to renewed interest in extending and understanding the classical theory. In phantom network theory, the distribution on the edges~$W$ is Gaussian, and in particular is invariant under the permutations $\sigma$ described above. More modern versions of the theory replace the Gaussian edge distribution with something more physically motivated, such as a fixed edgelength (freely jointed networks) or an energetic potential. But these distributions are still invariant under the permutation action on edges in a subdivision. Since the radius of gyration can be directly measured experimentally (for instance by small angle neutron scattering~\cite{WeiHore2021}), understanding the distribution of radii of gyration is extremely important in polymer science.

In the companion paper~\cite{contractionfactors}, we use \autoref{thm:symmetrization formula} to compute the exact expectation of radius of gyration for all subdivided graphs in phantom network theory. This quantity turns out to depend only on the underlying graph $\graphG'$ and on the number $n$ of subdivisions.

\section{Notation and background}

In physics, one usually defines a weighted point cloud to be a finite collection of vectors in $\R^d$ with corresponding weights. However, this can introduce some notational difficulties if points coincide. So we are a bit more formal here.
\begin{definition}
A~\emph{weighted point cloud} is a finite~\emph{index set} $V = \{v_1, \dots, v_n\}$ together with a~\emph{position function} $X \co V \rightarrow \R^d$ and a~\emph{weight function} $\varOmega \co V \to \R^+$. 
\end{definition}
We denote such a cloud by $(X,\varOmega)$ or just by $X$ if $\varOmega$ is a constant function. 
We can now give the usual definitions:
\begin{definition}
\label{defn:radius of gyration}
We define the~\emph{total weight} $\abs{\varOmega} = \sum_{v \in V} \varOmega(v)$.
The~\emph{center of mass} or~\emph{expectation} of a weighted point cloud is given by $\mu(X,\varOmega) = \frac{1}{\abs{\varOmega}} \sum_{v \in V} \varOmega(v) x(v)$. The (weighted) \emph{radius of gyration} or~\emph{variance} $\Rog(X,\varOmega)$ is given by any of the three equivalent expressions
\begin{align}
	\Rog(X,\varOmega)
	&\ceq\frac{1}{2\abs{\varOmega}^2} \sum_{i \in 1}^n \sum_{j \in 1}^n \omega_i \, \omega_j \norm{x_i - x_j}^2
	\label{eq:DefRog1}
	\\
	&= \frac{1}{\abs{\varOmega}} \sum_{i \in 1}^n \omega_i \norm{x_i - \mu(X,\varOmega)}^2
	\label{eq:DefRog2}
	\\
	&= \frac{1}{\abs{\varOmega}} \left( \sum_{i \in 1}^n \omega_i \norm{x_i}^2 \right) - \norm{\mu(X,\varOmega)}^2.
	\label{eq:DefRog3}
\end{align}
where $x_i := X(v_i)$ and $\omega_i = \varOmega(v_i)$.
\end{definition}
The equality between the first two lines is standard in physics while the equality between the second two is standard in probability, where $\Rog(X,\varOmega)$ is the scalar variance of the vector-valued random variable $X$ on the probability space $V$ where each $v \in V$ has probability $\varOmega(v)/\abs{\varOmega}$. The proofs are the usual ones. We note that rescaling the weights does not change either $\Rog(X,\varOmega)$ or $\mu(X,\varOmega)$.
Thus, when the weights in $\varOmega$ are all equal, we may assume without loss of generality that all $\varOmega(x) = 1$. In this case, we omit the $\varOmega$ in $\Rog(X,\varOmega)$ and $\mu(X,\varOmega)$, writing $\Rog(X)$ and $\mu(X)$.

We will need the following property of $\Rog$ which in principle follows easily from Eve's law. Since we are using a generalization of variance for the vector-valued random variate $X$, we provide an elementary proof in the Appendix for interested readers to check that everything goes through as it does in the usual case (cf.~\cite[(1.5b)]{ChanGolubLeveque1983} or \cite{oneill2016}).

\begin{lemma}
\label{lem:splitting formula for Rog}
Suppose that we have an index set $V$, a single position function $X \co V \to \R^d$ and a finite set of  weight functions $\varOmega_i \co V \to \R^+$, where $i \in 1, \dotsc, m$. Further, let $\varOmega \co V \to \R^+$ be the weight function defined by $\varOmega = \sum_{i=1}^m \varOmega_{i}$. Then 
\begin{equation}
\label{eq:splitting formula}
\Rog(X,\varOmega) =   \sum_{i=1}^m  \frac{\abs{\varOmega_i}}{\abs{\varOmega}} \, \Rog(X,\varOmega_i)
	+
	\frac{1}{2 \abs{\varOmega}^2} \sum_{i=1}^m \sum_{j=1}^m \abs{\varOmega_i} \abs{\varOmega_j} \norm{\mu(X,\varOmega_i) - \mu(X,\varOmega_j)}^2.
\end{equation}
\end{lemma}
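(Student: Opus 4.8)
The plan is to reduce everything to the ``computational'' identity~\eqref{eq:DefRog3}, which writes $\Rog(X,\varOmega)$ as the weighted mean of $\norm{x_i}^2$ minus $\norm{\mu(X,\varOmega)}^2$, and then to recognize the leftover terms as the radius of gyration of an auxiliary point cloud built from the partial centers of mass. First I would record the two elementary consequences of the hypothesis $\varOmega = \sum_{i=1}^m \varOmega_i$: the total weights add, $\abs{\varOmega} = \sum_{i=1}^m \abs{\varOmega_i}$, and the overall center of mass is the weighted average of the partial centers of mass,
\[
\mu(X,\varOmega) = \frac{1}{\abs{\varOmega}} \sum_{i=1}^m \abs{\varOmega_i}\, \mu(X,\varOmega_i),
\]
both of which follow immediately by splitting the defining sums over $V$ according to the index $i$.

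Next I would apply~\eqref{eq:DefRog3} to the full cloud $(X,\varOmega)$ and split the weighted sum of squared norms using $\varOmega(v) = \sum_i \varOmega_i(v)$. Applying~\eqref{eq:DefRog3} to each partial cloud $(X,\varOmega_i)$ to rewrite the inner averages as $\frac{1}{\abs{\varOmega_i}}\sum_{v \in V} \varOmega_i(v)\norm{x(v)}^2 = \Rog(X,\varOmega_i) + \norm{\mu(X,\varOmega_i)}^2$, this yields
\[
\Rog(X,\varOmega) = \sum_{i=1}^m \frac{\abs{\varOmega_i}}{\abs{\varOmega}}\Rog(X,\varOmega_i) + \left( \sum_{i=1}^m \frac{\abs{\varOmega_i}}{\abs{\varOmega}}\norm{\mu(X,\varOmega_i)}^2 - \norm{\mu(X,\varOmega)}^2 \right).
\]
The first sum is already the first term of~\eqref{eq:splitting formula}, so it remains only to identify the parenthesized ``between-group'' term.

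Finally, I would introduce the auxiliary weighted point cloud whose index set is $\{1,\dots,m\}$, whose position function sends $i \mapsto \mu(X,\varOmega_i)$, and whose weight function sends $i \mapsto \abs{\varOmega_i}$. By the center-of-mass identity above, this cloud has total weight $\abs{\varOmega}$ and center of mass exactly $\mu(X,\varOmega)$, so by~\eqref{eq:DefRog3} its radius of gyration is precisely the parenthesized term. Invoking the equivalence of~\eqref{eq:DefRog3} with~\eqref{eq:DefRog1} for this auxiliary cloud then rewrites that term as $\frac{1}{2\abs{\varOmega}^2}\sum_{i=1}^m\sum_{j=1}^m \abs{\varOmega_i}\abs{\varOmega_j}\norm{\mu(X,\varOmega_i) - \mu(X,\varOmega_j)}^2$, which is the second term of~\eqref{eq:splitting formula}, completing the proof.

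There is no genuine obstacle here: the argument is the vector-valued analogue of the law of total variance, and every step is a bookkeeping rearrangement of finite sums. The only point requiring a little care is the last one; rather than expand the pairwise-distance double sum by hand, it is cleanest to recognize the between-group term as the radius of gyration of the cloud of sub-means and let the already-established equivalence between~\eqref{eq:DefRog1} and~\eqref{eq:DefRog3} do the work, which also explains conceptually why the formula takes its symmetric pairwise form.
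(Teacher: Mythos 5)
Your proof is correct, and it takes a genuinely different route from the paper's appendix proof, although both rest on the same underlying algebra. The paper starts from the pairwise form~\eqref{eq:DefRog1} for the full cloud, expands each $\norm{x_k - x_\ell}^2$ into $\norm{x_k}^2 - 2\inner{x_k, x_\ell} + \norm{x_\ell}^2$, splits the weights $\varOmega_k = \sum_i \varOmega_{i,k}$, and then recombines the diagonal and cross terms by hand into $\sum_{i,j}\abs{\varOmega_i}\abs{\varOmega_j}\norm{\mu_i - \mu_j}^2$; it is one self-contained chain of equalities. You instead start from the ``second moment minus squared mean'' form~\eqref{eq:DefRog3}, split only the weighted sum of $\norm{x(v)}^2$ by group, and then recognize the leftover $\sum_i \frac{\abs{\varOmega_i}}{\abs{\varOmega}}\norm{\mu(X,\varOmega_i)}^2 - \norm{\mu(X,\varOmega)}^2$ as the radius of gyration of the auxiliary cloud of sub-means (positions $\mu(X,\varOmega_i)$, weights $\abs{\varOmega_i}$), finally converting to the symmetric pairwise form by invoking the already-established equivalence of~\eqref{eq:DefRog1} and~\eqref{eq:DefRog3} for that auxiliary cloud. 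This is exactly the ``Eve's law'' argument the paper alludes to before the lemma statement but does not actually write out: it makes the within-group/between-group decomposition structurally explicit, avoids the by-hand recombination of cross terms, and explains conceptually why the second term of~\eqref{eq:splitting formula} is itself a radius of gyration. What the paper's version buys in exchange is that it never needs to introduce a new point cloud or re-invoke the equivalence of the three definitions mid-proof, which is arguably preferable in an appendix whose stated purpose is to let the reader check that the vector-valued bookkeeping goes through from scratch. Both proofs are complete; yours is the cleaner conceptual organization, the paper's the more elementary verification.
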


We will deal with connected, oriented graphs $\graphG$ and $\graphG'$ (allowing loop edges and multiple edges joining the same pair of vertices), where $\graphG$ is constructed from $\graphG'$ by subdividing each edge of $\graphG'$ into $n$ sub-edges.
We assume that $\graphG'$ has vertices $\{\vertex'_1, \dotsc, \vertex'_{\verticesV'}\}$ and edges $\braces{\edge'_1, \dotsc, \edge'_{\edgesE'}}$. The vertices of $\graphG$ are denoted $\vertex_{i,j}$ where $\vertex_{0,j} = \vertex'_j$ is a vertex of $\graphG'$ and $\vertex_{i,j}$ is the $j$th new vertex created by subdividing edge $\edge'_i$ of $\graphG'$.
Note that either $i = 0$ and $j \in \braces{1, \dotsc, \verticesV'}$ or $i \in \braces{1, \dotsc, \edgesE'}$ and $j \in \braces{1, \dotsc, n-1}$. 

We use $\vertexgroup$ to denote the set of vertices of $\graphG$ and use $\vertexgroup'$ for the set of vertices of $\graphG'$.
Further, let ${\vertexgroup_i \ceq \braces{v_{i,1},\dotsc,v_{i,n-1}}}$ be the set of vertices with first index~$i \in \braces{1,\dotsc,\edgesE'}$, and let ${\vertexgroup_0 \ceq \braces{v_{0,1}, \dotsc, v_{0,\edgesE'}}}$ be the vertices originating from the structure graph.
The edges of $\graphG$ are denoted $\edge_{i,j}$, where this is the $j$th edge created by subdividing~$\edge'_i$.
Note that $i \in \braces{1, \dotsc, \edgesE'}$ and $j \in \braces{1, \dotsc, n}$.
We let $\edgegroup_i \ceq \braces{e_{i,1},\dotsc,e_{i,n}}$ be the set of edges with first index~$i$.

Since each graph is oriented, there are maps $\head$ and $\tail$ giving the indices of the incoming and outgoing vertices associated to each edge index, so that $\edge_{i,j}$ joins $\vertex_{\tail(i,j)}$ to $\vertex_{\head(i,j)}$ and $\edge'_i$ joins $\vertex'_{\tail(i)}$ to $\vertex'_{\head(i)}$.
By construction we have
\begin{equation}
\begin{aligned}
	\tail(i,1) &= (0,\tail(i))
	,
	&
	\tail(i,j) &= (i,j-1) \;\; \text{for $j \in \braces{2,\dotsc,n}$}
	,
	\\
	\head(i,n) &= (0,\head(i))
	,
	&
	\head(i,j) &= (i,j) \;\; \text{for $j \in \braces{1,\dotsc,n-1}$}
	.
\end{aligned}
\label{eq:HeadsAndTails}
\end{equation}

Embeddings of $\graphG$ and $\graphG'$ in $\R^d$ are really weighted point clouds with position functions $X \co \vertexgroup \to \R^d$ and $X' \co \vertexgroup' \to \R^d$ and all weights equal to 1. The position functions $\positiongroup_{i} : \vertexgroup_i \to \R^d$ for $i \in \{0, \dotsc, \edgesE'\}$ defined by restricting $\positiongroup$ to each $\vertexgroup_i$ construct $\edgesE + 1$ (smaller) weighted point clouds $\positiongroup_i$, again with all weights equal to 1. We let $x_{0,j} := \positiongroup_0(v_{0,j}) = \positiongroup(v_{0,j})$ for $j \in \{1, \dotsc,\verticesV'\}$, and $x_{i,j} := \positiongroup_{i}(v_{i,j}) = \positiongroup(v_{i,j})$ for $i \in \{1, \dotsc, \edgesE'\}$ and $j \in \{1, \dotsc, n-1\}$. Similarly, $x'_j := X'(v'_j)$ for $j \in \{1, \dotsc,\verticesV'\}$.

We say that $X$ is~\emph{compatible} with $X'$ if $x_{0,j} = x'_j$ for $j \in 1, \dots, \verticesV'$. If we identify $\vertexgroup_0$ with $\vertexgroup'$, we then have $\positiongroup_0 = \positiongroup'$ on this set. 

Since the graphs are oriented, these position functions give rise to corresponding ``displacement'' functions $\displacementgroup : \edgegroup \to \R^d$ and $W' : \edgegroup' \to \R^d$ given by 
\[
\displacementgroup(\edge_{i,j}) = X(v_{\head(i,j)}) - X(v_{\tail(i,j)}), \quad\text{and}\quad \displacementgroup'(\edge'_j) = X'(v'_{\head(j)}) - X(v'_{\tail(j)}).
\label{eq:definition of displacement vectors}
\]
We define $\displacementgroup_i \co \edgegroup_i \to \R^d$ for $i \in \{1, \dotsc, \edgesE'\}$ by restricting $\displacementgroup$ to $\edgegroup_i$, and let $w_{i,j} := \displacementgroup_i(\edge_{i,j}) = \displacementgroup(\edge_{i,j})$ and $w'_i := \displacementgroup'(\edge'_i)$ for $i \in \{1, \dotsc, \edgesE'\}$ and $j \in \{1, \dotsc, n\}$. If we think of them as position functions, $\displacementgroup$, $\displacementgroup_i$, and $\displacementgroup'$ are  all also weighted point clouds with all weights equal to 1.

We now express $\Rog(\positiongroup)$ in terms of properties of the smaller point clouds $\positiongroup_i$.

\begin{proposition}
\label{prop:splitting formula for rog}
Suppose $\graphG'$ is a graph with $\verticesV'$ vertices and $\edgesE'$ edges and $\graphG$ is a graph with $\verticesV$ vertices and $\edgesE$ edges created by subdividing each edge of $\graphG'$ into $n$ pieces. Further, suppose that $X : \vertexgroup \to \R^d$ is an embedding of $\graphG$, and $X' : \vertexgroup' \to \R^d$ is the corresponding compatible embedding of $\graphG'$. Then
\[
\begin{aligned}
	\Rog(X)
	&=
	\frac{n-1}{\verticesV} \sum_{i=1}^{\edgesE'\!\!}  \Rog(\positiongroup_i)
	+
	\frac{\verticesV'}{\verticesV} \Rog(X')
	+
	\\
	&\qquad
    +\frac{\pars{n-1}^2}{2 \, \verticesV^2} \sum_{i = 1}^{\edgesE'\!\!} \sum_{j = 1}^{\edgesE'\!\!} \norm{\mu(\positiongroup_i) - \mu(\positiongroup_j)}^2
	+
	\frac{\pars{n-1} \, \verticesV'}{\verticesV^2} \sum_{i=1}^{\edgesE'\!\!} \norm{\mu(\positiongroup_i) - \mu(X')}^2
	.
\end{aligned}
\]
\end{proposition}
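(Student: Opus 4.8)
The plan is to apply the splitting formula of \autoref{lem:splitting formula for Rog} to the full point cloud $X \co \vertexgroup \to \R^d$, using the partition of the vertex set $\vertexgroup = \vertexgroup_0 \sqcup \vertexgroup_1 \sqcup \dotsb \sqcup \vertexgroup_{\edgesE'}$ into the junction vertices $\vertexgroup_0$ and the $\edgesE'$ groups $\vertexgroup_i$ of subdivision vertices. Concretely, for each $i \in \braces{0,1,\dotsc,\edgesE'}$ I would take $\varOmega_i \co \vertexgroup \to \R^+$ to be the indicator function of $\vertexgroup_i$, so that the constant weight function $\varOmega \ceq \sum_{i=0}^{\edgesE'} \varOmega_i \equiv 1$ recovers the unweighted cloud $X$. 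Since each subcloud is obtained by restriction, $\Rog(X,\varOmega_i) = \Rog(\positiongroup_i)$ and $\mu(X,\varOmega_i) = \mu(\positiongroup_i)$; in particular $\positiongroup_0 = X'$ gives $\Rog(X,\varOmega_0) = \Rog(X')$ and $\mu(X,\varOmega_0) = \mu(X')$. The total weights are $\abs{\varOmega} = \verticesV$, $\abs{\varOmega_0} = \verticesV'$, and $\abs{\varOmega_i} = n-1$ for $i \in \braces{1,\dotsc,\edgesE'}$.

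Plugging these into the splitting formula expresses $\Rog(X)$ as a single sum $\sum_{i=0}^{\edgesE'} \frac{\abs{\varOmega_i}}{\verticesV}\,\Rog(\positiongroup_i)$ plus a double sum $\frac{1}{2\verticesV^2}\sum_{i,j=0}^{\edgesE'}\abs{\varOmega_i}\abs{\varOmega_j}\norm{\mu(\positiongroup_i)-\mu(\positiongroup_j)}^2$. I would then peel off the $i=0$ index in each. In the single sum this immediately produces the $\frac{\verticesV'}{\verticesV}\Rog(X')$ and $\frac{n-1}{\verticesV}\sum_{i=1}^{\edgesE'}\Rog(\positiongroup_i)$ contributions.

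For the double sum I would split the index pairs into three cases. The diagonal term $i=j=0$ vanishes, since $\norm{\mu(X')-\mu(X')}^2 = 0$. The pairs with both indices in $\braces{1,\dotsc,\edgesE'}$ contribute $\frac{(n-1)^2}{2\verticesV^2}\sum_{i,j=1}^{\edgesE'}\norm{\mu(\positiongroup_i)-\mu(\positiongroup_j)}^2$. The mixed pairs, with exactly one index equal to $0$, occur twice by the symmetry of the summand, and together give $\frac{2\,\verticesV'(n-1)}{2\verticesV^2}\sum_{i=1}^{\edgesE'}\norm{\mu(\positiongroup_i)-\mu(X')}^2 = \frac{(n-1)\,\verticesV'}{\verticesV^2}\sum_{i=1}^{\edgesE'}\norm{\mu(\positiongroup_i)-\mu(X')}^2$. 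Collecting these four contributions reproduces the claimed identity.

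The argument is essentially bookkeeping, so I do not expect a deep obstacle; the one point requiring care is the use of indicator weights, since \autoref{lem:splitting formula for Rog} is stated for strictly positive weight functions. I would address this either by observing that zero-weight vertices contribute nothing to any of the sums defining $\Rog$ and $\mu$, so that the lemma extends verbatim to nonnegative weights with positive total weight, or equivalently by identifying $\Rog(X,\varOmega_i)$ and $\mu(X,\varOmega_i)$ with the corresponding quantities of the restricted cloud $\positiongroup_i$. This also implicitly uses $n \geq 2$, so that each $\vertexgroup_i$ is nonempty and $\abs{\varOmega_i} > 0$.
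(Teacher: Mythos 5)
Your proposal is correct and is essentially the paper's own proof: the paper likewise applies \autoref{lem:splitting formula for Rog} with the Kronecker-delta weight functions $\varOmega_{i'}(v_{i,j}) = \KronDelta_{i',i}$, notes $\abs{\varOmega} = \verticesV$, $\abs{\varOmega_0} = \verticesV'$, $\abs{\varOmega_i} = n-1$, identifies $\Rog(X,\varOmega_i) = \Rog(\positiongroup_i)$ and $\mu(X,\varOmega_i) = \mu(\positiongroup_i)$, and then simply states that the lemma yields the result, leaving the peeling-off of the $i=0$ index implicit. Your explicit case bookkeeping and your remark about zero-valued indicator weights (the lemma is stated for $\varOmega_i \co V \to \R^+$, a point the paper glosses over) are both sound and only make the same argument more careful.
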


\begin{proof}
Our goal is to apply~\autoref{lem:splitting formula for Rog}. We start by defining weight functions $\varOmega_0, \dots, \varOmega_{\edgesE'}$ on $\vertexgroup$, where $\varOmega_{i'}(v_{i,j}) := \KronDelta_{i',i}$. Observe $\varOmega := \sum_{i'} \varOmega_{i'}$ has $\varOmega(v_{i,j}) = \sum_{i'=0}^{\edgesE'} \varOmega_{i'}(v_{i,j}) = 1$ for each $v_{i,j}$. Further, the total weights $\abs{\varOmega} = \verticesV$, $\abs{\varOmega_0} = \verticesV'$, and $\abs{\varOmega_1} = \cdots = \abs{\varOmega_{\edgesE'}} = n-1$. 

It follows immediately that $\Rog(X,\varOmega_i) = \Rog(\positiongroup_i)$ and $\mu(X,\varOmega_i) = \mu(\positiongroup_i)$; in particular that $\Rog(X,\varOmega_0) = \Rog(\positiongroup_0) = \Rog(X')$ and $\mu(X,\varOmega_0) = \mu(\positiongroup_0) = \mu(X')$. Applying~\autoref{lem:splitting formula for Rog} then yields the result.
\end{proof}

\section{Symmetrizing over rearrangements of the entries in each $\displacementgroup_i$}

We start with a definition:

\begin{definition}
\label{defn:big group of permutations}
Let $S = (S_n)^{\edgesE'}$ be the product of $\edgesE'$ copies of the permutation group $S_n$ of $n$ elements. The group $S$ acts on edge (or displacement vector) indices in the expected way: if $\sigma \in S$ is given by $(\sigma_1,\dotsc, \sigma_{\edgesE'})$ then $\sigma(i,j) = (i,\sigma_i(j))$. That is, $\sigma$ permutes indices within each subdivided edge.
\end{definition}

\begin{proposition}
Suppose that $X \co \vertexgroup \to \R^d$ and $X' \co \vertexgroup \to \R^d$ are compatible and $\sigma \in S$. Then $X^\sigma \co \vertexgroup \to \R^d$ defines a point cloud with index set $\vertexgroup$ given by
\begin{equation}
	x^\sigma_{0,j} \ceq x'_j,
	\quad \text{and} \quad
	x^\sigma_{i,j} \ceq x^\sigma_{0,\tail(i)} + \sum_{k=1}^j w_{\sigma(i,k)}
	\quad
	\text{for $j \in \braces{1,\dotsc,n-1}$.}
	\label{eq:definition of xsigmaij}
\end{equation}
This position function is compatible with $X'$ and has displacement vectors $w^{\sigma}_{i,j} \ceq w_{\sigma(i,j)}$.
\label{prop:permutations preserve embeddability}
\end{proposition}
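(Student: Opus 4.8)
The plan is to verify directly from the defining formula~\eqref{eq:definition of xsigmaij} that $X^\sigma$ has the two claimed properties. Compatibility with $X'$ is immediate, since the first equation of~\eqref{eq:definition of xsigmaij} sets $x^\sigma_{0,j} = x'_j$ for every $j \in \braces{1,\dotsc,\verticesV'}$, which is exactly the compatibility condition. The substance is therefore in computing the displacement vectors $w^\sigma_{i,j} = X^\sigma(v_{\head(i,j)}) - X^\sigma(v_{\tail(i,j)})$ of $X^\sigma$ and checking that each equals $w_{\sigma(i,j)}$. I would split this into the \emph{interior} edges ($j \in \braces{1,\dotsc,n-1}$) and the \emph{closing} edge ($j = n$), since~\eqref{eq:definition of xsigmaij} defines the interior positions $x^\sigma_{i,j}$ explicitly but pins down the head junction only through the separate equation $x^\sigma_{0,\head(i)} = x'_{\head(i)}$.

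For the interior edges I would use the head/tail relations in~\eqref{eq:HeadsAndTails}: for $j = 1$ we have $w^\sigma_{i,1} = x^\sigma_{i,1} - x^\sigma_{0,\tail(i)}$, while for $2 \le j \le n-1$ we have $w^\sigma_{i,j} = x^\sigma_{i,j} - x^\sigma_{i,j-1}$. In either case the sums appearing in~\eqref{eq:definition of xsigmaij} telescope, leaving exactly the single term $w_{\sigma(i,j)}$. This is a routine cancellation.

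The crux is the closing edge $j = n$, where~\eqref{eq:HeadsAndTails} gives $\head(i,n) = (0,\head(i))$ and $\tail(i,n) = (i,n-1)$, so $w^\sigma_{i,n} = x^\sigma_{0,\head(i)} - x^\sigma_{i,n-1}$. Expanding $x^\sigma_{i,n-1}$ with~\eqref{eq:definition of xsigmaij} and substituting $x^\sigma_{0,\head(i)} = x'_{\head(i)}$ and $x^\sigma_{0,\tail(i)} = x'_{\tail(i)}$ yields $w^\sigma_{i,n} = w'_i - \sum_{k=1}^{n-1} w_{\sigma(i,k)}$, where $w'_i = x'_{\head(i)} - x'_{\tail(i)}$. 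To finish I need the identity $w'_i = \sum_{j=1}^n w_{i,j}$, obtained by telescoping the displacements of the original embedding $X$ via~\eqref{eq:HeadsAndTails}; it says precisely that the $n$ sub-edge displacements add up to the junction-to-junction displacement. The key structural point is that this total is invariant under the permutation $\sigma_i$, so that $\sum_{k=1}^n w_{\sigma(i,k)} = \sum_{j=1}^n w_{i,j} = w'_i$ and hence $\sum_{k=1}^{n-1} w_{\sigma(i,k)} = w'_i - w_{\sigma(i,n)}$. Substituting gives $w^\sigma_{i,n} = w_{\sigma(i,n)}$, completing the verification. This permutation-invariance of the total displacement is the one genuinely structural ingredient, and it is exactly what makes the rearrangement construction consistent: the sub-edges may be reordered freely without moving the endpoints of the path.
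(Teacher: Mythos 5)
Your proposal is correct and follows essentially the same route as the paper's proof: verify the interior displacements by telescoping the defining sums via~\eqref{eq:HeadsAndTails}, then handle the closing edge $j=n$ using the fact that $\sum_{k=1}^n w_{\sigma(i,k)} = \sum_{j=1}^n w_{i,j} = w'_i$ because permutation leaves the total displacement unchanged. The paper phrases this last step as ``summation is commutative'' rather than isolating the identity $w'_i = \sum_{j=1}^n w_{i,j}$, but the argument is the same.
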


\begin{proof}
	We have to prove that $x^\sigma_{\head(i,j)} - x^\sigma_{\tail(i,j)} = w_{\sigma(i,j)}$ for all $i$ and $j$.
	For $j \in \braces{2,\dotsc,n-1}$,
	we know from \eqref{eq:HeadsAndTails} that
	$\head(i,j) = (i,j)$ and $\tail(i,j) = (i,j-1)$.
	Thus \eqref{eq:definition of xsigmaij} leads to
	\begin{align*}
		x^\sigma_{\head(i,j)} - x^\sigma_{\tail(i,j)}
		&=
		x^\sigma_{i,j} - x^\sigma_{i,j-1}
		\\
		&=
		\pars*{
			x^\sigma_{0,\tail(i)} + \sum_{k=1}^j w_{\sigma(i,k)}
		}
		-
		\pars*{
			x^\sigma_{0,\tail(i)} + \sum_{k=1}^{j-1} w_{\sigma(i,k)}
		}
		=
		w_{\sigma(i,j)}
		.
	\end{align*}
	For $j = 1$ we have $\head(i,1) = (i,1)$ and $\tail(i,1) = (0,\tail(i))$, thus
	\begin{equation*}
		x^\sigma_{\head(i,1)} - x^\sigma_{\tail(i,1)}
		=
		x^\sigma_{(i,1)} - x^\sigma_{(0,\tail(i))}
		=
		\pars[\big]{
			x^\sigma_{0,\tail(i)} + w_{\sigma(i,1)}
		}
		-
		x^\sigma_{0,\tail(i)}
		=
		w_{\sigma(i,j)}
		.
	\end{equation*}
	Finally, for $j = n$ we recall that $\head(i,n) = (0,\head(i))$ and $\tail(i,n) = (i,n-1)$.
	Moreover,
	\begin{equation*}
		x^\sigma_{0,\head(i)}
		= x^\sigma_{0,\tail(i)} + \sum_{k=1}^n w_{i,k}
		= x^\sigma_{0,\tail(i)} + \sum_{k=1}^n w_{i,\sigma_i(k)}
		= x^\sigma_{0,\tail(i)} + \sum_{k=1}^n w^\sigma_{i,k}
	\end{equation*}
	because summation is commutative, so the sum of the $w_{i,k}$ in the second term is the same as the sum of the permuted $w_{i,\sigma_i}(k)$ in the third. Thus, we finally obtain
	\begin{align*}
		x^\sigma_{\head(i,n)} - x^\sigma_{\tail(i,n)}
		&=
		x^\sigma_{(0,\head(i))} - x^\sigma_{(i,n-1)}
		\\
		&=
		\pars*{
			x^\sigma_{0,\tail(i)} + \sum_{k=1}^n w^\sigma_{i,k}
		}
		-
		\pars*{
			x^\sigma_{0,\tail(i)} + \sum_{k=1}^{n-1} w_{\sigma(i,k)}
		}
		=
		w_{\sigma(i,n)}
		.
		\qedhere
	\end{align*}
\end{proof}

In many polymer models, the probability distribution on $W$ (even when conditioned on the overall graph type) is exchangeable among the edges in each $\edgegroup_i$.
This means that if $\sigma \in S$, all embeddings $X^\sigma$ of $\graphG$ are equally probable.
Hence, the expected radius of gyration of $\graphG$ is the same as the expectation of the average radius of gyration of such $X^\sigma$, $\sigma \in S$:
\begin{align*}
	E \!\pars[\big]{ \Rog(X) }
	=
	E \!\pars[\bigg]{ \frac{1}{\# S}  \sum_{\sigma \in S} \Rog(X^\sigma) }
	.
\end{align*}
This observation originally motivated us to try to find a simple formula for the (finite) average over permutations on the right hand side, in the hope that such a formula would make the right-hand expectation easier to compute than the left-hand one. This is indeed the case.

\begin{definition}
Given an element $\sigma \in S$ and indices $i \in \braces{1, \dotsc, \edgesE'}$, and $j, k \in \braces{1, \dotsc, n}$, we define the~\emph{indicator variable}
\begin{align}
	c(i,j,k,\sigma)
	\ceq
	\begin{cases}
		1, & \sigma^{-1}_i(k) \leq j, \\
		0, & \sigma^{-1}_i(k) > j.
	\end{cases}
\end{align}
\end{definition}

\begin{lemma} \label{lem:indicator variables}
We have
\[
	\sum_{k=1}^j w_{\sigma(i,k)} = \sum_{k=1}^n c(i,j,k,\sigma) \, w_{i,k}
	,
	\quad
	\text{and}
	\quad
	x^\sigma_{i,j} = x^\sigma_{0,\tail(i)} + \sum_{k=1}^n c(i,j,k,\sigma) \, w_{i,k}
	.
\]
\end{lemma}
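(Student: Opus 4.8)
The plan is to establish the first identity by a re-indexing of the summation, after which the second follows by direct substitution into the defining formula~\eqref{eq:definition of xsigmaij}.

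First I would unpack the index action. By~\autoref{defn:big group of permutations} we have $\sigma(i,k) = (i,\sigma_i(k))$, so $w_{\sigma(i,k)} = w_{i,\sigma_i(k)}$ and the left-hand side becomes $\sum_{k=1}^j w_{i,\sigma_i(k)}$. The heart of the argument is to determine which of the $n$ vectors $w_{i,1},\dotsc,w_{i,n}$ actually occur in this partial sum. Since $\sigma_i$ is a bijection on $\braces{1,\dotsc,n}$, as $k$ runs over $\braces{1,\dotsc,j}$ the index $\sigma_i(k)$ runs over the $j$-element image set $\sigma_i(\braces{1,\dotsc,j})$, hitting each of its elements exactly once. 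Hence a fixed vector $w_{i,k}$ appears (with coefficient $1$) precisely when $k \in \sigma_i(\braces{1,\dotsc,j})$, that is, when $\sigma_i^{-1}(k) \leq j$---which is exactly the condition under which $c(i,j,k,\sigma) = 1$. Re-indexing the sum over the image values $\sigma_i(k)$ and relabeling the summation variable back to $k$ then yields $\sum_{k=1}^n c(i,j,k,\sigma)\,w_{i,k}$, which is the first identity.

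For the second identity I would simply insert the first into~\eqref{eq:definition of xsigmaij} from~\autoref{prop:permutations preserve embeddability}, since that formula already expresses $x^\sigma_{i,j}$ as $x^\sigma_{0,\tail(i)} + \sum_{k=1}^j w_{\sigma(i,k)}$; replacing the partial sum by its re-indexed form gives the stated expression at once.

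The computation is routine; the one place that invites an error is the passage from the membership statement ``$k$ lies in the image $\sigma_i(\braces{1,\dotsc,j})$'' to the inequality ``$\sigma_i^{-1}(k) \leq j$.'' Keeping careful track of the fact that we are summing over \emph{preimages} under $\sigma_i$---so that the indicator is naturally phrased in terms of $\sigma_i^{-1}$ rather than $\sigma_i$---is the main thing to watch, and is precisely why the definition of $c(i,j,k,\sigma)$ uses $\sigma_i^{-1}$.
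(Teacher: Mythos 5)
Your proof is correct and takes essentially the same approach as the paper: both arguments re-index the partial sum using the bijectivity of $\sigma_i$, the paper by writing $\sum_{k=1}^j w_{\sigma(i,k)} = \sum_{k=1}^n u(k,j)\,w_{\sigma(i,k)}$ and substituting $k \mapsto \sigma_i^{-1}(k)$, and you by observing that $w_{i,k}$ occurs in the partial sum exactly when $k \in \sigma_i\pars{\braces{1,\dotsc,j}}$, i.e.\ when $\sigma_i^{-1}(k) \leq j$. Your handling of the second identity---direct substitution into~\eqref{eq:definition of xsigmaij}---is likewise exactly what the paper intends.
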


\begin{proof} Clearly, $\sum_{k=1}^j w_{\sigma(i,k)} = \sum_{k=1}^n u(k,j) \, w_{\sigma(i,k)}$ where $u(k,j) = 1$ if $k \leq j$ and $0$ otherwise. Since we are summing over all $k \in \braces{1, \dotsc, n}$, we get the same answer if we permute the $k$, replacing each $k$ with $\sigma_i^{-1}(k)$. This yields
\[
\begin{aligned}
\sum_{k=1}^n u(k,j) \, w_{\sigma(i,k)} &= \sum_{k=1}^n u(\sigma^{-1}_i(k),j) \, w_{\sigma(i,\sigma_i^{-1}(k))} \\
&= \sum_{k=1}^n u(\sigma^{-1}_i(k),j) \, w_{i,\sigma_i \sigma_i^{-1}(k)} =
\sum_{k=1}^n u(\sigma^{-1}_i(k),j) \, w_{i,k}.
\end{aligned}
\]
The only thing left to note is that $u(\sigma_i^{-1}(k),j) = c(i,j,k,\sigma)$.
\end{proof}

As we build towards the proof of \autoref{thm:symmetrization formula}, the strategy is to average each term on the right hand side of \autoref{prop:splitting formula for rog} over $S$. We start with the summand of the first term.

\begin{lemma}
\label{lem:average radius of gyration of Xi}
For $n > 1$ and for each $i \in \braces{1, \dotsc, \edgesE'}$, we have
\[
	\frac{1}{\# S} \sum_{\sigma \in S} \Rog(\positiongroup^{\sigma}_i)
	=
	\frac{n \pars{n+1} \pars{n-2}}{12 \pars{n-1}^2} \Rog(\displacementgroup_i) + \frac{n-2}{12 \, n} \norm{w'_i}^2.
\]
\end{lemma}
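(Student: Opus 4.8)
The plan is to reduce the average over $S$ to an average over a single copy of $S_n$, rewrite $\Rog(\positiongroup_i^\sigma)$ through the pairwise-distance formula~\eqref{eq:DefRog1}, and then exploit the fact that a block of consecutively placed displacement vectors becomes, after averaging, a uniformly random subset of $\braces{w_{i,1},\dotsc,w_{i,n}}$. First I would note that the cloud $\positiongroup_i^\sigma$ depends only on the factor $\sigma_i \in S_n$ (the other factors never touch indices in $\edgegroup_i$), so
\[
	\frac{1}{\# S}\sum_{\sigma \in S}\Rog(\positiongroup_i^\sigma) = \frac{1}{n!}\sum_{\sigma_i \in S_n}\Rog(\positiongroup_i^\sigma).
\]
Since $\Rog$ is translation invariant, the common basepoint $x^\sigma_{0,\tail(i)}$ in~\eqref{eq:definition of xsigmaij} is irrelevant, and writing the radius of gyration of the $(n-1)$-point cloud in pairwise form~\eqref{eq:DefRog1} gives
\[
	\Rog(\positiongroup_i^\sigma) = \frac{1}{2(n-1)^2}\sum_{j=1}^{n-1}\sum_{l=1}^{n-1}\norm{x^\sigma_{i,j} - x^\sigma_{i,l}}^2,
	\qquad
	x^\sigma_{i,j} - x^\sigma_{i,l} = \sum_{k=l+1}^{j} w_{\sigma(i,k)}\ \ (j>l),
\]
so each summand is the squared norm of the sum of the $r \ceq j-l$ displacement vectors sitting in positions $l+1,\dotsc,j$.

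The key step is the averaging. For fixed $l<j$, as $\sigma_i$ ranges over $S_n$ the index set $\braces{\sigma_i(l+1),\dotsc,\sigma_i(j)}$ is a uniformly distributed $r$-element subset of $\braces{1,\dotsc,n}$. Expanding $\norm[\big]{\sum_{k} w_{\sigma(i,k)}}^2$ into diagonal and off-diagonal terms and using that a fixed index $k$ lies in a random $r$-subset with probability $r/n$, while a fixed ordered pair $k\neq k'$ lies in it with probability $\frac{r(r-1)}{n(n-1)}$, I obtain
\[
	\frac{1}{n!}\sum_{\sigma_i}\norm{x^\sigma_{i,j} - x^\sigma_{i,l}}^2
	= \frac{r}{n}\,L_i + \frac{r(r-1)}{n(n-1)}\pars[\big]{\norm{w'_i}^2 - L_i},
\]
where $L_i \ceq \sum_{k=1}^n \norm{w_{i,k}}^2$ and I have used $\sum_{k\neq k'}\inner{w_{i,k},w_{i,k'}} = \norm{w'_i}^2 - L_i$ (because $w'_i = \sum_k w_{i,k}$). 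Crucially, this expression depends on $(j,l)$ only through $r=\abs{j-l}$.

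It then remains to sum over all ordered pairs $(j,l) \in \braces{1,\dotsc,n-1}^2$. Collecting terms by the value $r=\abs{j-l}$, which occurs with multiplicity $2(n-1-r)$, reduces everything to the two elementary sums $\sum_{j,l}\abs{j-l} = \tfrac{(n-1)(n-2)n}{3}$ and $\sum_{j,l}\abs{j-l}^2 = \tfrac{(n-1)^2(n-2)n}{6}$ over $\braces{1,\dotsc,n-1}^2$. After dividing by $2(n-1)^2$ and simplifying, the average collapses to
\[
	\frac{n-2}{12(n-1)^2}\brackets[\big]{(n+1)\,L_i + (n-3)\,\norm{w'_i}^2}.
\]
The final step is to re-express this using $\Rog(\displacementgroup_i)$: applying~\eqref{eq:DefRog3} to the $n$-point cloud $\displacementgroup_i$ gives $\Rog(\displacementgroup_i) = \tfrac1n L_i - \tfrac{1}{n^2}\norm{w'_i}^2$, i.e. $L_i = n\,\Rog(\displacementgroup_i) + \tfrac1n\norm{w'_i}^2$; substituting this and simplifying $\tfrac{n+1}{n}+(n-3)=\tfrac{(n-1)^2}{n}$ yields exactly the claimed formula. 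I expect the only genuine labor to be the bookkeeping in this final algebraic reduction together with the verification of the two combinatorial sums; the entire conceptual content sits in the uniform-random-subset observation, which is where I would be most careful to argue precisely.
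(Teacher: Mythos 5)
Your proof is correct and follows essentially the same route as the paper's: your ``uniform random $r$-subset'' observation with probabilities $\tfrac{r}{n}$ and $\tfrac{r(r-1)}{n(n-1)}$ is exactly the paper's indicator-variable computation (probabilities $\tfrac{\abs{j-k}}{n}$ and $\tbinom{\abs{j-k}}{2}/\tbinom{n}{2}$), followed by the same two combinatorial sums over pairs $(j,l)$ and the same final substitution $L_i = n\,\Rog(\displacementgroup_i) + \tfrac{1}{n}\norm{w'_i}^2$. No gaps; all intermediate identities check out.
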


\begin{proof}
Using~\eqref{eq:DefRog1} from \autoref{defn:radius of gyration} we may write
\begin{equation}
	\frac{1}{\# S} \sum_{\sigma \in S} \Rog(\positiongroup^{\sigma}_i)
	=
	\frac{1}{\# S} \sum_{\sigma \in S} \frac{1}{2 \pars{n-1}^2} \sum_{1 \leq k,j < n} \norm{x^{\sigma}_{i,j} - x^{\sigma}_{i,k}}^2 .
\label{eq:getting started}
\end{equation}
Using~\autoref{lem:indicator variables}, we observe that
\[
	x^{\sigma}_{i,j} - x^{\sigma}_{i,k}
	=
	\sum_{\ell=1}^n \pars[\big]{ c(i,j,\ell,\sigma) - c(i,k,\ell,\sigma) } \, w_{i,\ell}.
\]
Further, the difference of the $c(i,-,-,-)$ terms can be rewritten in terms of a new indicator:
\[
u(i,j,k,\ell,\sigma) = \begin{cases}
+1, & \text{if $j > k$ and $k < \sigma^{-1}(\ell) \leq j$}, \\
-1, & \text{if $k > j$ and $j < \sigma^{-1}(\ell) \leq k$}, \\
0, & \text{otherwise}
\end{cases}
\]
which means that
\[
\begin{aligned}
\sum_{1 \leq k,j < n} \norm{x^{\sigma}_{i,j} - x^{\sigma}_{i,k}}^2 &=
\sum_{1 \leq k,j < n} \sum_{1 \leq \ell,m \leq n} u(i,j,k,\ell,\sigma) \, u(i,j,k,m,\sigma) \, \dot{w_{i,\ell}}{w_{i,m}},
\end{aligned}
\]
and hence that we can rewrite the right hand side of~\eqref{eq:getting started} as
\begin{equation}
	\sum_{1 \leq \ell, m \leq n}
	\dot{w_{i,\ell}}{w_{i,m}}  \frac{1}{2 \pars{n-1}^2} \sum_{1 \leq k,j < n}
	\left( \frac{1}{\# S} \sum_{\sigma \in S} u(i,j,k,\ell,\sigma) \, u(i,j,k,m,\sigma)  \right). \label{eq:alternate form}
\end{equation}
We note that the product of the $u(i,j,k,-,\sigma)$ is $+1$ if $\sigma^{-1}(\ell)$ and $\sigma^{-1}(m)$ are both between $k$ and $j$ and $0$ otherwise.
That is, the term in parentheses is the probability that $\sigma^{-1}(\ell)$ and $\sigma^{-1}(m)$ are between $j$ and $k$ when $\sigma$ is randomly selected in $S$. If $\ell = m$, then $\sigma^{-1}_i(\ell) = \sigma^{-1}_i(m)$ is uniformly distributed in $\braces{1, \dotsc, n}$ and so this probability is $\frac{|j-k|}{n}$. It can easily be checked that
\begin{equation}
	\sum_{1 \leq k,j < n} \frac{|j-k|}{n} = \frac{(n - 1)(n-2)}{3}. \label{eq:first sum}
\end{equation}
Otherwise, if $\ell \neq m$, then $\sigma^{-1}_i(\ell)$ and $\sigma^{-1}_i(m)$ are uniformly distributed among the $\binom{n}{2}$ pairs of numbers in $\braces{1, \dotsc, n}$, of which $\binom{|j-k|}{2}$ are between $j$ and $k$. Again, it is easy to check that
\begin{equation}
	\sum_{1 \leq k,j < n}
	\frac{\binom{|j-k|}{2}}{\binom{n}{2}} = \sum_{1 \leq k,j < n} \frac{(|j-k| -1) |j-k| }{\pars{n-1} \, n} = \frac{\pars{n-2}\pars{n-3}}{6}. \label{eq:second sum}
\end{equation}
Combining~\eqref{eq:getting started} with~\eqref{eq:alternate form},~\eqref{eq:first sum}, and~\eqref{eq:second sum} yields
\begin{equation}
\begin{aligned}
\frac{1}{\# S} \sum_{\sigma \in S} \Rog(\positiongroup^{\sigma}_i) &= \frac{n-2}{6 \pars{n-1}} \sum_{\ell=1}^{n} \dot{w_{i,\ell}}{w_{i,\ell}} + \frac{\pars{n-2} \pars{n-3}}{12 \pars{n-1}^2} \sum_{1 \leq \ell \neq m \leq n} \dot{w_{i,\ell}}{w_{i,m}} \\
&= \frac{\pars{n+1} \pars{n-2}}{12 \pars{n-1}^2} \sum_{\ell=1}^n \dot{w_{i,\ell}}{w_{i,\ell}} +
\frac{\pars{n-2} \pars{n-3}}{12 \pars{n-1}^2} \sum_{\ell, m = 1}^n \dot{w_{i,\ell}}{w_{i,m}}.
\end{aligned}
\label{eq:inner product formula}
\end{equation}
As noted above, if we think of $\edgegroup_i$ as the index set, $\displacementgroup_i$ is a weighted point cloud, with
\begin{equation}
\begin{aligned}
	\Rog(\displacementgroup_i)
	&=
	\frac{1}{2\,n^2} \sum_{\ell,m = 1}^n \norm{w_{i,\ell} - w_{i,m}}^2
	=
	\frac{1}{2\,n^2} \sum_{\ell,m = 1}^n
	\pars[\Big]{
		\norm{w_{i,\ell}}^2 + \norm{w_{i,m}}^2 - 2 \,\dot{w_{i,\ell}}{w_{i,m}}
	}
	\\
	&=
	\frac{1}{n} \sum_{\ell=1}^n \dot{w_{i,\ell}}{w_{i,\ell}}
	-
	\frac{1}{n^2} \sum_{\ell,m = 1}^n \dot{w_{i,\ell}}{w_{i,m}}.
\end{aligned} \label{eq:rog of wi}
\end{equation}
Solving this equation, we get $\sum_{\ell=1}^n \dot{w_{i,\ell}}{w_{i,\ell}} = n \Rog(\displacementgroup_i) + \frac{1}{n} \sum_{\ell = 1}^n \sum_{m = 1}^n \dot{w_{i,\ell}}{w_{i,m}}$.
Substituting into~\eqref{eq:inner product formula} and simplifying, we get
\[
\begin{aligned}
	\frac{1}{\# S} \sum_{\sigma \in S} \Rog(\positiongroup^{\sigma}_i)
	&= \frac{n \pars{n+1} \pars{n-2}}{12 \pars{n-1}^2} \Rog(\displacementgroup_i)
	+
	\frac{n-2}{12 \, n} \sum_{\ell = 1}^n \sum_{m = 1}^n \dot{w_{i,\ell}}{w_{i,m}}
	\\
	&=
	\frac{n \pars{n+1} \pars{n-2}}{12 \pars{n-1}^2} \Rog(\displacementgroup_i)
	+
	\frac{n-2}{12 \, n} \norm[\Bigg]{\sum_{\ell=1}^n w_{i,\ell}}^2.
\end{aligned}
\]
Since $\sum_{\ell=1}^n w_{i,\ell} = w'_i$, this completes the proof.
\end{proof}
In preparation for averaging the second and fourth terms in \autoref{prop:splitting formula for rog} over $S$, we define two new point clouds (with unit weights):
\begin{definition}
\label{def:center of mass cloud}
For each $i \in 0, \dotsc, \edgesE'$, we define the~$i$-th \emph{center of mass cloud} $M_i \co S \rightarrow \R^d$ by $\comcloud_i(\sigma) := \mu(\positiongroup^\sigma_i)$, where $\mu(\positiongroup^\sigma_i)$ is the center of mass of the point cloud $\positiongroup^\sigma_i$. We define the~$i$-th~\emph{parent cloud} to be the point cloud with  position function $P_i \co \{1, \dots, n-1\} \times S \to \R^d$ given by $\parentcloud_i(\sigma,j) = x^{\sigma}_{i,j}$. These point clouds have all weights equal to 1. 
\end{definition}
It is also convenient to define
\begin{definition}
\label{def:edge midpoints}
If $X'$ is an embedding of $\graphG'$, for each edge $\edge'_i$ of $\graphG'$, we define the midpoint $m'_i$ by
\[
	m'_i = \frac{1}{2} \pars[\big]{ x'_{\head(i)} + x'_{\tail(i)} }
\]
\end{definition}
The point clouds $\comcloud_i$ and $\parentcloud_i$ have a nice relationship with the midpoint $m'_i$:

\begin{figure}
	\includegraphics[
		trim = 0 0 0 0,
		clip = true,
		angle = 0,
		width=0.6\textwidth
	]{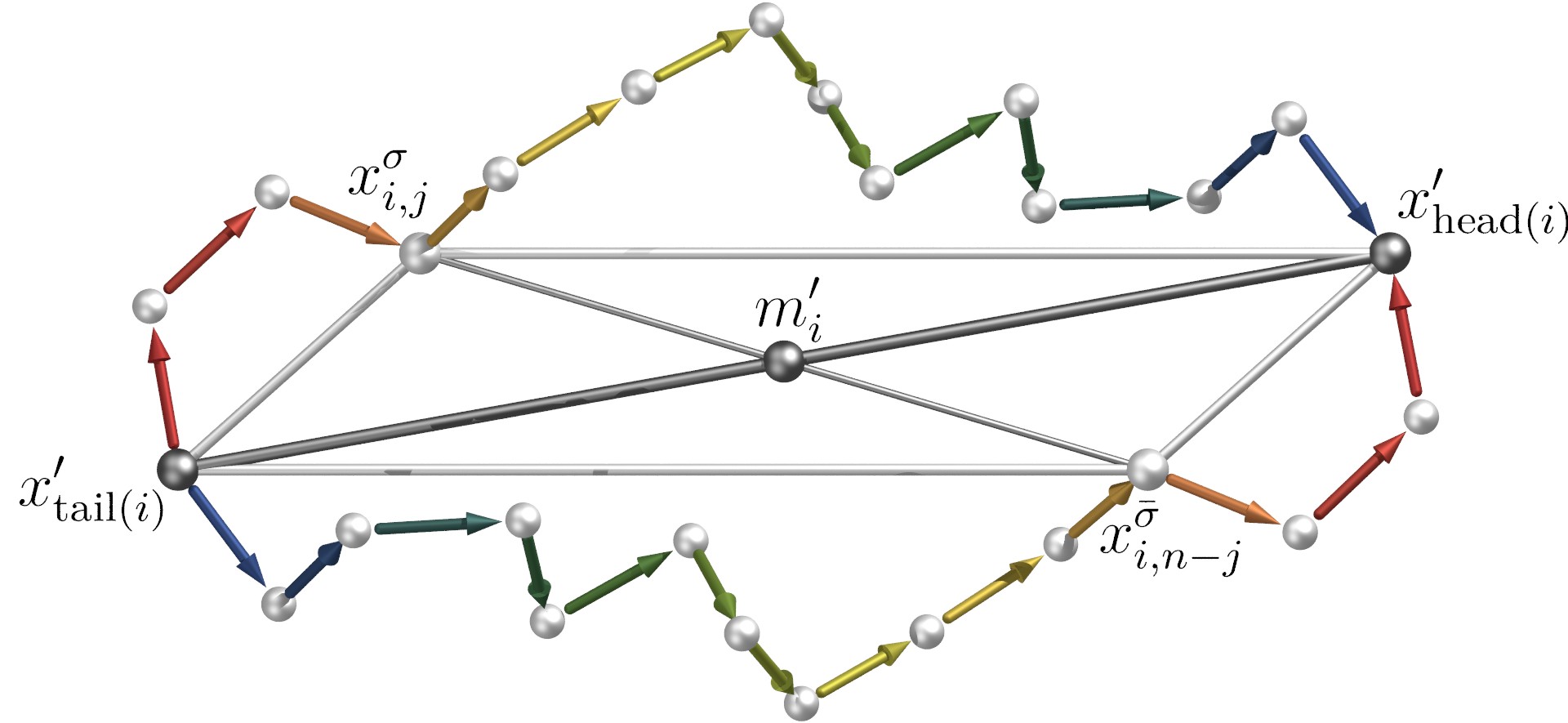}
	\caption{Here we see two paths from $x'_{\tail(i)}$ to $x'_{\head(i)}$ given by adding up the same set of edge displacements $w^\sigma_{i,j}$: once in the order given by the permutation $\sigma$ (top) and one by the reverse ordering $\bar\sigma$ (bottom). Because the vectors $x^{\sigma}_{i,j}-x'_{\tail(i)}$ and $x'_{\head(i)} - x^{\bar\sigma}_{i,n-j}$ are the sum of the same $j$ displacement vectors (colored orange and red in the picture), they are equal. This means that the four points $x'_{\tail(i)}, x^\sigma_{i,j}, x'_{\head(i)}, x^{\bar\sigma}_{i,n-j}$ form a parallelogram, shown in gray. The diagonals of the parallelogram meet at $m'_i$, which is therefore also the center of mass of the pair of points $x^\sigma_{i,j}$ and $x^{\bar\sigma}_{i,n-j}$.}
	\label{fig:midpoint-lemma}
\end{figure}

\begin{lemma}
\label{lem:center of mass of center of mass cloud}
For $i \in \braces{1, \dotsc, \edgesE'}$, we have $\mu(\parentcloud_i) = \mu(\comcloud_i) = m'_i$.
\end{lemma}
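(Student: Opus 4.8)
The plan is to prove the two equalities separately, treating $\mu(\comcloud_i) = \mu(\parentcloud_i)$ as a formal consequence of the definitions and then identifying this common mean with $m'_i$ via a reversal symmetry. For the first equality, both clouds of \autoref{def:center of mass cloud} carry unit weights, so
\[
	\mu(\comcloud_i)
	= \frac{1}{\# S}\sum_{\sigma\in S}\mu(\positiongroup^\sigma_i)
	= \frac{1}{\# S}\sum_{\sigma\in S}\frac{1}{n-1}\sum_{j=1}^{n-1}x^\sigma_{i,j}
	= \mu(\parentcloud_i),
\]
the last step being just the statement that the unweighted mean of the means of the $\# S$ equal-size subclouds $\positiongroup^\sigma_i$ equals the mean of their disjoint union $\parentcloud_i$. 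Thus it suffices to show $\mu(\parentcloud_i) = m'_i$.

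For this I would introduce the reversal $\bar\sigma\in S$ of $\sigma$, given componentwise by $\bar\sigma_i(k) = \sigma_i(n+1-k)$, and observe that the map $(\sigma,j)\mapsto(\bar\sigma,n-j)$ is an involution of $S\times\braces{1,\dotsc,n-1}$, since reversal is an involution on each factor $S_n$ and $j\mapsto n-j$ preserves $\braces{1,\dotsc,n-1}$. The crux is the parallelogram identity of \autoref{fig:midpoint-lemma},
\[
	x^\sigma_{i,j} + x^{\bar\sigma}_{i,n-j} = 2\,m'_i
	\qquad\text{for all } \sigma\in S,\ j\in\braces{1,\dotsc,n-1}.
\]
To establish this I use \eqref{eq:definition of xsigmaij} together with compatibility ($x^\sigma_{0,\tail(i)} = x'_{\tail(i)}$ and $x^{\bar\sigma}_{0,\tail(i)} = x'_{\tail(i)}$, while $x'_{\head(i)} - x'_{\tail(i)} = \sum_{k=1}^n w_{i,k}$) to write $x^\sigma_{i,j} - x'_{\tail(i)} = \sum_{k=1}^j w_{\sigma(i,k)}$ and $x'_{\head(i)} - x^{\bar\sigma}_{i,n-j} = \sum_{k=1}^n w_{i,k} - \sum_{k=1}^{n-j} w_{\bar\sigma(i,k)}$. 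Re-indexing the reversed partial sum by $\ell = n+1-k$ turns it into $\sum_{\ell=j+1}^{n} w_{\sigma(i,\ell)}$, and since $\sum_{k=1}^n w_{\sigma(i,k)} = \sum_{k=1}^n w_{i,k}$ the right-hand side collapses to $\sum_{k=1}^j w_{\sigma(i,k)}$. Hence $x^\sigma_{i,j} - x'_{\tail(i)} = x'_{\head(i)} - x^{\bar\sigma}_{i,n-j}$, which rearranges to the parallelogram identity.

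Finally, because $(\sigma,j)\mapsto(\bar\sigma,n-j)$ is a bijection, I can symmetrize the defining sum,
\[
	\mu(\parentcloud_i)
	= \frac{1}{(n-1)\,\# S}\sum_{\sigma\in S}\sum_{j=1}^{n-1}x^\sigma_{i,j}
	= \frac{1}{2(n-1)\,\# S}\sum_{\sigma\in S}\sum_{j=1}^{n-1}\pars[\big]{x^\sigma_{i,j} + x^{\bar\sigma}_{i,n-j}}
	= m'_i,
\]
applying the parallelogram identity in the last step. The only real work is the bookkeeping in that identity — confirming that the partial sum of the reversed permutation is exactly the complementary block of displacement vectors, so that the two sides are literally the same vector; everything else is a formal averaging-and-involution argument.
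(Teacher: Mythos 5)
Your proposal is correct and follows essentially the same route as the paper: both prove $\mu(\comcloud_i) = \mu(\parentcloud_i)$ by the law of iterated expectations, then identify the common value with $m'_i$ via the reversal bijection $\sigma \mapsto \bar\sigma$ and the parallelogram identity $x^\sigma_{i,j} + x^{\bar\sigma}_{i,n-j} = 2\,m'_i$. The only cosmetic difference is that you reverse every component of $\sigma$ while the paper reverses only the $i$-th one; since only the $i$-th component enters the identity and either map is a bijection of $S$, this changes nothing.
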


\begin{proof}
On the one hand, we have
\begin{equation}
	\mu(P_i)
	=
	\frac{1}{\# S} \frac{1}{n-1} \sum_{\sigma \in S} \sum_{j=1}^{n-1} x^\sigma_{i,j}
	=
	\frac{1}{\# S} \sum_{\sigma \in S}
	\underset{
		\comcloud_i = \mu(\positiongroup^\sigma_i)
	}{
		\underbrace{\pars[\bigg]{
			\frac{1}{n-1} \sum_{j=1}^{n-1} x^\sigma_{i,j}
		}}
	}
	=
	\mu(\comcloud_i)
	,
	\label{eq:muM}
\end{equation}
which is a special case of the law of iterated expectations.

On the other hand, for each $\sigma \in S$, there is a unique $\bar{\sigma} \in S$ so that $\sigma_k = \bar{\sigma}_k$ for $k \neq i$ and $\sigma_i$ is the reverse permutation from $\bar{\sigma}_i$: that is, that $\sigma_i(j) = \bar{\sigma}_i \pars{n+1-j}$. Because the map $S \to S$ defined by $\sigma \mapsto \bar{\sigma}$ is a bijection, we have
\begin{equation}
	\mu(P_i)
	=
	\frac{1}{\# S} \frac{1}{n-1} \sum_{\sigma \in S} \sum_{j=1}^{n-1}
	x^{\bar{\sigma}}_{i,n-j}
	=
	\frac{1}{\# S} \frac{1}{n-1} \sum_{\sigma \in S} \sum_{j=1}^{n-1}
	\frac{1}{2} \pars[\big]{ x^{\sigma}_{i,j} + x^{\bar{\sigma}}_{i,n-j} }
	.
	\label{eq:muP}
\end{equation}%
Using~\eqref{eq:definition of xsigmaij} and the fact that $x^{\sigma}_{0,j} = x'_j$ for all $j$, we compute
\[
\begin{aligned}
x^{\sigma}_{i,j} - x'_{\tail(i)} &= w_{\sigma(i,1)}+\cdots+w_{\sigma(i,j)} =  w_{\bar{\sigma}(i,n-j+1)} + \cdots + w_{\bar{\sigma}(i,n)} = x'_{\head(i)} - x^{\bar{\sigma}}_{i,n-j}\\
x'_{\head(i)} - x^{\sigma}_{i,j} &= w_{\sigma(i,j+1)} + \cdots + w_{\sigma(i,n)} = w_{\bar{\sigma}(i,1)} + \cdots + w_{\bar{\sigma}(i,n-j)} = x^{\bar{\sigma}}_{i,n-j} - x'_{\tail(i)}.
\end{aligned}
\]
so $x'_{\tail(i)}$, $x^\sigma_{i,j}$, $x'_{\head(i)}$, and $x^{\bar{\sigma}}_{i,n-j}$ are the vertices of a parallelogram, as shown in~\autoref{fig:midpoint-lemma}. Since the diagonals of a parallelogram intersect at their midpoints, we have
\[
	\frac{1}{2} \pars[\big]{x^{\sigma}_{i,j} + x^{\bar{\sigma}}_{i,n-j}}
	=
	\frac{1}{2} \pars[\big]{x'_{\tail(i)} + x'_{\head(i)}}
	=
	m_i'
	,
\]
regardless of the choice of $\sigma$ and~$j$. Using \eqref{eq:muM} and \eqref{eq:muP}, we now have $\mu(\comcloud_i) = \mu(\parentcloud_i) = m'_i$.
\end{proof}
We note that we have actually proved something much stronger than the statement of~\autoref{lem:center of mass of center of mass cloud}. For instance, this proof shows that the average center of mass of all~\emph{self-avoiding} walks from $p$ to $q$ is at the midpoint of $pq$, since the reverse of a self-avoiding walk is also self-avoiding. We are now ready to prove the main result of this section.

\begin{proposition}
\label{prop:average squared distance between centers of mass of edge groups}
Suppose $i \neq j$ and $i, j \in \braces{1, \dotsc, \edgesE'}$.
The average (over $\sigma \in S$) of the squared distance between $\mu(\positiongroup^\sigma_i)$ and $\mu(\positiongroup^\sigma_j)$ is given by
\[
\begin{aligned}
	\frac{1}{\# S} &\sum_{\sigma \in S} \norm[\big]{ \mu(\positiongroup^\sigma_i) - \mu(\positiongroup^\sigma_j) }^2
	=
 	\norm[\big]{m'_i - m'_j}^2 + \frac{n^2 \pars{n+1} }{12 \pars{n-1}^2} \pars[\big]{ \Rog(\displacementgroup_i) + \Rog(\displacementgroup_j) }
\end{aligned}
\]
\end{proposition}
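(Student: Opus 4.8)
The plan is to exploit that, for $i \neq j$, the centers of mass $\mu(\positiongroup^\sigma_i) = \comcloud_i(\sigma)$ and $\mu(\positiongroup^\sigma_j) = \comcloud_j(\sigma)$ depend only on the factors $\sigma_i$ and $\sigma_j$ of $\sigma = (\sigma_1, \dotsc, \sigma_{\edgesE'}) \in S$. Since $S = (S_n)^{\edgesE'}$ is a direct product and $i \neq j$, these two coordinates are independent under the uniform distribution on $S$, so $\comcloud_i$ and $\comcloud_j$ are independent $\R^d$-valued random vectors. Writing $E$ for the (uniform) average over $\sigma \in S$ and expanding the squared norm,
\[
	\frac{1}{\#S}\sum_{\sigma \in S} \norm[\big]{\comcloud_i(\sigma) - \comcloud_j(\sigma)}^2
	=
	E\brackets[\big]{\norm{\comcloud_i}^2} - 2\,E\brackets[\big]{\dot{\comcloud_i}{\comcloud_j}} + E\brackets[\big]{\norm{\comcloud_j}^2}.
\]
Independence gives $E\brackets{\dot{\comcloud_i}{\comcloud_j}} = \dot{\mu(\comcloud_i)}{\mu(\comcloud_j)}$, while \eqref{eq:DefRog3} applied to the cloud $\comcloud_i$ (with index set $S$ and unit weights) gives $E\brackets{\norm{\comcloud_i}^2} = \norm{\mu(\comcloud_i)}^2 + \Rog(\comcloud_i)$, and similarly for $j$.

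By \autoref{lem:center of mass of center of mass cloud} we have $\mu(\comcloud_i) = m'_i$ and $\mu(\comcloud_j) = m'_j$, so the three deterministic terms assemble into $\norm{m'_i - m'_j}^2$, and the problem reduces to proving
\[
	\Rog(\comcloud_i) = \frac{n^2(n+1)}{12(n-1)^2}\,\Rog(\displacementgroup_i)
\]
together with the same identity for $j$. Thus the independence observation replaces a joint average over all of $S$ with the variance of a single center-of-mass cloud.

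To evaluate $\Rog(\comcloud_i)$, I would use \autoref{lem:indicator variables} to write
\[
	\comcloud_i(\sigma) = \frac{1}{n-1}\sum_{j=1}^{n-1} x^\sigma_{i,j}
	= x'_{\tail(i)} + \frac{1}{n-1}\sum_{k=1}^n \pars[\Big]{\sum_{j=1}^{n-1} c(i,j,k,\sigma)}\, w_{i,k}.
\]
The inner sum counts those $j \in \braces{1,\dotsc,n-1}$ with $j \geq \sigma^{-1}_i(k)$, which equals $n - \sigma^{-1}_i(k)$; hence the coefficient of $w_{i,k}$ is $b_k(\sigma) \ceq n - \sigma^{-1}_i(k)$, and as $\sigma_i$ ranges over $S_n$ the vector $(b_1,\dotsc,b_n)$ ranges uniformly over all orderings of $(0,1,\dotsc,n-1)$. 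Since translation by the fixed vector $x'_{\tail(i)}$ leaves $\Rog$ unchanged, I would compute $\Rog(\comcloud_i) = E\brackets{\norm{\comcloud_i}^2} - \norm{m'_i}^2$ from the moments $E[b_k] = \tfrac{n-1}{2}$, $E[b_k^2] = \tfrac{(n-1)(2n-1)}{6}$, and $E[b_k b_l] = \tfrac{(3n-1)(n-2)}{12}$ for $k \neq l$ (the last following from the fact that $\sum_k b_k = \binom{n}{2}$ is constant). Collecting the resulting coefficients of $\sum_k \norm{w_{i,k}}^2$ and of $\norm{\sum_k w_{i,k}}^2 = \norm{w'_i}^2$ and comparing with the expression \eqref{eq:rog of wi} for $\Rog(\displacementgroup_i)$ produces the claimed factor $\tfrac{n^2(n+1)}{12(n-1)^2}$.

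The main obstacle is exactly the moment bookkeeping in this last step: computing the off-diagonal moment $E[b_k b_l]$ and verifying that the combination $\tfrac{n(n+1)}{12(n-1)^2}\sum_k\norm{w_{i,k}}^2 - \tfrac{n+1}{12(n-1)^2}\norm{w'_i}^2$ that emerges collapses, via \eqref{eq:rog of wi} (which says $\sum_k\norm{w_{i,k}}^2 = n\,\Rog(\displacementgroup_i) + \tfrac1n\norm{w'_i}^2$), to precisely $\tfrac{n^2(n+1)}{12(n-1)^2}\Rog(\displacementgroup_i)$. None of this is deep, but the algebra is where any error would hide; the genuinely simplifying idea is the independence of $\comcloud_i$ and $\comcloud_j$ used at the outset.
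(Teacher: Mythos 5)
Your proposal is correct, and its first half coincides with the paper's own argument: both reduce the average of $\norm{\mu(\positiongroup^\sigma_i) - \mu(\positiongroup^\sigma_j)}^2$ to $\norm{m'_i - m'_j}^2 + \Rog(\comcloud_i) + \Rog(\comcloud_j)$ by exploiting the product structure of $S$ (independence of the $i$th and $j$th coordinates for $i \neq j$) together with \autoref{lem:center of mass of center of mass cloud}. Where you genuinely diverge is in evaluating $\Rog(\comcloud_i)$. The paper introduces the parent cloud $\parentcloud_i$ of \autoref{def:center of mass cloud}, applies \autoref{lem:splitting formula for Rog} to obtain $\Rog(\parentcloud_i) = \frac{1}{\#S}\sum_{\sigma \in S}\Rog(\positiongroup_i^\sigma) + \Rog(\comcloud_i)$, computes $\Rog(\parentcloud_i)$ by a separate indicator-variable and hockey-stick calculation, and then subtracts the average already obtained in \autoref{lem:average radius of gyration of Xi}. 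You instead compute $\Rog(\comcloud_i)$ head-on, writing $\comcloud_i(\sigma) - x'_{\tail(i)} = \frac{1}{n-1}\sum_{k=1}^n b_k(\sigma)\, w_{i,k}$ with $b_k(\sigma) = n - \sigma_i^{-1}(k)$, so that $(b_1,\dotsc,b_n)$ is a uniformly random arrangement of $(0,1,\dotsc,n-1)$, and then doing second-moment bookkeeping; your stated moments $E[b_k] = \tfrac{n-1}{2}$, $E[b_k^2] = \tfrac{(n-1)(2n-1)}{6}$, and $E[b_k b_l] = \tfrac{(3n-1)(n-2)}{12}$ for $k \neq l$ are all correct, and they do yield $\tfrac{n(n+1)}{12(n-1)^2}\sum_k \norm{w_{i,k}}^2 - \tfrac{n+1}{12(n-1)^2}\norm{w'_i}^2$, which collapses via \eqref{eq:rog of wi} to $\tfrac{n^2(n+1)}{12(n-1)^2}\Rog(\displacementgroup_i)$ as claimed. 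Your route is more self-contained---it needs neither $\parentcloud_i$ nor \autoref{lem:average radius of gyration of Xi}---at the cost of computing the covariances of the coefficient vector directly; the paper's route reuses \autoref{lem:average radius of gyration of Xi} (which it needs anyway for the main theorem) and replaces that covariance computation with a law-of-total-variance decomposition plus one further combinatorial probability.
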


\begin{proof}
The identity \eqref{eq:DefRog3} in~\autoref{defn:radius of gyration} implies
\[
	\frac{1}{\#S} \sum_{\sigma \in S} \norm[\big]{\mu(\positiongroup^\sigma_i) - \mu(\positiongroup^{\sigma}_j)}^2
	=
	\norm[\big]{ \mu(\comcloud_i) - \mu(\comcloud_j) }^2
	+
	\Rog\pars[\Big]{ \set[\big]{\mu(\positiongroup^\sigma_i) - \mu(\positiongroup^\sigma_j) }{ \sigma \in S } }
	.
\]
By \autoref{lem:center of mass of center of mass cloud}, we have
$\mu(\comcloud_i) = m'_i$ and $\mu(\comcloud_j) = m'_j$.

We now consider $\Rog \pars[\Big]{\set[\big]{\mu(\positiongroup^\sigma_i) - \mu(\positiongroup^\sigma_j)}{\sigma \in S }}$. First, think of the centers of mass $\mu(\positiongroup^\sigma_i)$ and $\mu(\positiongroup^\sigma_j)$ as random variables on the probability space $\sigma \in S$ (where each permutation is equally probable). By~\autoref{defn:big group of permutations}, these are independent random variables, because $S$ is a product of permutation groups. Therefore
\[
\Var(\mu(\positiongroup^\sigma_i) - \mu(\positiongroup^{\sigma}_j)) = \Var(\mu(\positiongroup^\sigma_i)) + \Var(-\mu(\positiongroup^{\sigma}_j)) = \Var(\mu(\positiongroup^\sigma_i)) + \Var(\mu(\positiongroup^\sigma_j)).
\]
or equivalently,
\[
	\Rog \pars[\Big]{\set[\big]{\mu(\positiongroup^\sigma_i) - \mu(\positiongroup^\sigma_j)}{\sigma \in S }} = \Rog(\comcloud_i) + \Rog(\comcloud_j)
\]
We now claim that for each $i \in \braces{1, \dotsc, \edgesE'}$, we have $\Rog(\comcloud_i) = \frac{n^2 \pars{n+1}}{12 \pars{n-1}^2} \Rog(\displacementgroup_i)$, which will complete the proof. We start by expressing~$\Rog(P_i)$ in terms of the properties of smaller point clouds using~\autoref{lem:splitting formula for Rog}, as we did in the proof of~\autoref{prop:splitting formula for rog}. 

Suppose that $\{ \varOmega_\sigma \mid \sigma \in S \}$ are weight functions on $\{1, \dots, n-1\} \times S$ indexed by permutations $\sigma \in S$, where $\varOmega_{\sigma}(x^{\sigma'}_{i,j}) = \KronDelta_{\sigma,\sigma'}$. We note $\varOmega := \sum_{\sigma \in S} \varOmega_{\sigma}$ has $\varOmega(x^{\sigma'}_{i,j}) = \sum_{\sigma \in S} \varOmega_{\sigma} (x^{\sigma'}_{i,j}) = 1$ for all $x^{\sigma'}_{i,j} \in P_i$. We compute $\abs{\varOmega_{\sigma}} = n-1$ and $\abs{\varOmega} = (n-1) \times \# S$. We also have $\Rog(P_i,\varOmega_{\sigma}) = \Rog(\positiongroup_i^{\sigma})$ and $\mu(P_i,\varOmega_{\sigma}) = \mu(X_i^{\sigma})$. 
Using~\autoref{lem:splitting formula for Rog}, we conclude
\[
\begin{aligned}
\Rog(\parentcloud_i) &= \frac{1}{\# S} \sum_{\sigma \in S} \Rog(\positiongroup_i^\sigma) + 
\frac{1}{2} \sum_{\sigma \in S} \sum_{\tau \in S} \frac{ \abs{\varOmega_{\sigma}} \abs{\varOmega_{\tau}}}{ \abs{\varOmega}^2 } \norm{\mu(X^\sigma_i) - \mu(X^\tau_i)}^2 \\
&= \frac{1}{\# S} \sum_{\sigma \in S} \Rog(\positiongroup_i^\sigma) + \frac{1}{2 (\# S)^2} \sum_{\sigma \in S} \sum_{\tau \in S} \norm{\mu(X^\sigma_i) - \mu(X^\tau_i)}^2 \\
&= \frac{1}{\# S} \sum_{\sigma \in S} \Rog(\positiongroup_i^\sigma) + \Rog(M_i)
\end{aligned}
\]
Since we've already computed $\Rog(M_i)$ in~\autoref{lem:average radius of gyration of Xi}, it remains to compute $\Rog(\parentcloud_i)$.

We claim that $\Rog(\parentcloud_i) = \frac{n \pars{n+1}}{6 \pars{n-1}} \Rog(\displacementgroup_i) + \frac{n-2}{12\,n} \norm{w'_i}^2$. We start with the statement that
\[
	\Rog(\parentcloud_i)
	=
	\frac{1}{\pars{n-1} \times (\#S)}
	\sum_{\sigma \in S} \sum_{j=1}^{n-1} \,
	\norm{x^{\sigma}_{i,j} - \mu(\parentcloud_i)}^2.
\]
Now using $\sum_{\ell=1}^n w_{i,\ell} = x'_{\head(i)} - x'_{\tail(i)}$ and~\autoref{lem:center of mass of center of mass cloud}, we have
\[
\begin{aligned}
	x^{\sigma}_{i,j} - \mu(\parentcloud_i)
	&=
	x'_{\tail(i)}
	+
	\sum_{\ell=1}^n c(i,j,k,\sigma) \, w_{i,\ell} - \frac{1}{2} \pars[\big]{ x'_{\head(i)} + x'_{\tail(i)} }
	\\
	&=
	\sum_{\ell=1}^n \pars[\Big]{ c(i,j,\ell,\sigma) - \frac{1}{2} } \, w_{i,\ell},
\end{aligned}
\]
so we may expand
\begin{equation}
	\norm{x^{\sigma}_{i,j} - \mu(\parentcloud_i)}^2
	=
	\sum_{\ell=1}^n \sum_{m=1}^n
	\pars[\Big]{c(i,j,\ell,\sigma) - \frac{1}{2} } \pars[\Big]{ c(i,j,m,\sigma) - \frac{1}{2} }
	\dot{w_{i,\ell}}{w_{i,m}}
	.
\end{equation}
As in the proof of~\autoref{lem:average radius of gyration of Xi}, we switch the order of summation, writing
\begin{equation}
	\sum_{\sigma \in S} \sum_{j=1}^{n-1}
	\norm{x^{\sigma}_{i,j} - \mu(\parentcloud_i)}^2
	=
	\sum_{\ell=1}^{n} \sum_{m=1}^{n}
	\dot{w_{i,\ell}}{w_{i,m}}
	\sum_{\sigma \in S} \sum_{j=1}^{n-1}
	\pars[\Big]{ c(i,j,\ell,\sigma) - \frac{1}{2} }
	\pars[\Big]{ c(i,j,m,\sigma) - \frac{1}{2} }
	.
\end{equation}
Now $c(i,j,k,\sigma) - \frac{1}{2}$ is equal to $\frac{1}{2}$ if $\sigma^{-1}_i(k) \leq j$ and $-\frac{1}{2}$ otherwise. Therefore, if
\[
u(i,j,\ell,m,\sigma) =
\begin{cases}
1, & \text{if $\sigma^{-1}_i(\ell), \sigma^{-1}_i(m)$ are both $\leq j$ or both $> j$}, \\
0, & \text{otherwise}
\end{cases}
\]
then
\[
	\pars[\Big]{ c(i,j,\ell,\sigma) - \frac{1}{2} }
	\pars[\Big]{ c(i,j,m,\sigma)    - \frac{1}{2} }
	=
	\frac{1}{2} u(i,j,\ell,m,\sigma) - \frac{1}{4}.
\]
If we think of $j$ and $\sigma$ as random variables uniformly distributed on $\braces{1, \dotsc, n-1}$ and $S$, then $\frac{1}{\pars{n-1} \times (\#S)} \sum_{\sigma} \sum_{j} u(i,j,\ell,m,\sigma)$ is the probability that $\sigma^{-1}_i(\ell)$ and $\sigma^{-1}_i(m)$ are either both~$\leq j$ or~both~$> j$.
If $\ell = m$, this probability is 1, regardless of the value of $j$.

If $\ell \neq m$, for any fixed $j$ there are $\binom{j}{2}$ unordered pairs $\ell,m \leq j$ and $\binom{n-j}{2}$ pairs $\ell,m > j$ among the $\binom{n}{2}$ pairs of numbers between $1$ and $n$. Thus the probability of this event when $j$ is randomly selected from $1, \dots, n-1$ is given by
\[
\frac{1}{n-1} \sum_{j=1}^{n-1} \frac{\binom{j-1}{2} + \binom{n-j}{2}}{\binom{n}{2}} =
\frac{2}{n-1} \frac{\binom{n}{3}}{\binom{n}{2}} = \frac{2 \pars{n-2}}{3 \pars{n-1}}
\]
where the middle step uses the ``hockey-stick'' identity for both sums of binomial coefficients (which are equal to each other).
We now know that
\[
\begin{aligned}
	\Rog(\parentcloud_i)
	&=
	\frac{1}{2} \sum_{\ell = 1}^n \dot{w_{i,\ell}}{w_{i,\ell}}
	+
	\frac{\pars{n-2}}{3 \pars{n-1}} \mathop{\sum\sum}\limits_{1 \leq \ell \neq m \leq n}
	\dot{w_{i,\ell}}{w_{i,m}}
	-
	\frac{1}{4}  \sum_{\ell=1}^{n} \sum_{m=1}^{n}  \dot{w_{i,\ell}}{w_{i,m}}
	\\
	&=
	\frac{n+1}{6 \pars{n-1}} \sum_{\ell = 1}^n \dot{w_{i,\ell}}{w_{i,\ell}}
	+
	\frac{n-5}{12 \pars{n-1}} \sum_{\ell=1}^{n} \sum_{m=1}^{n}  \dot{w_{i,\ell}}{w_{i,m}}
	\\
	&=
	\frac{n \pars{n+1}}{6 \pars{n-1}} \Rog(\displacementgroup_i)
	+
	\frac{n-2}{12\,n} \sum_{\ell=1}^{n} \sum_{m=1}^{n} \dot{w_{i,\ell}}{w_{i,m}}
	,
\end{aligned}
\]
where we used~\eqref{eq:rog of wi} in the last line, and note that this proves our claim. Now subtracting the result of~\autoref{lem:average radius of gyration of Xi} and simplifying completes the proof.
\end{proof}

\section{The degree-radius of gyration}

Chen and Zhang introduced the ``degree-Kirchhoff index''~\cite{Chen:2007dv} in the context of polymer science. This can be expressed in terms of a graph Laplacian of $\graphG$ with weights corresponding to those usually used in Riemannian geometry and Riemannian spectral graph theory~\cite{Chung1997}. We now consider a closely related quantity:

\begin{definition}
Given a graph $\graphG'$ with vertex set $\vertexgroup'$, the~\emph{degree weighted radius of gyration} of an embedding $X' \co \vertexgroup' \to \R^d$ is the radius of gyration of the weighted point cloud $(X',\deg)$ where $\deg(v'_i)$ is the degree of the corresponding vertex $\vertex'_i$. Since $\abs{\deg} = \sum_{i=1}^{\verticesV'} \deg(v'_i) = 2 \edgesE'$, this is 
\[
	\Rog(X',\deg) = \frac{1}{2 \edgesE'} \sum_{i = 1}^{\verticesV'}
	\deg(v'_i) \, \norm{ x'_i - \mu(X',\deg)}^2
	 \quad
	 \text{where}
	 \quad
	 \mu(X',\deg)
	 = \frac{1}{2 \edgesE'}\sum_{i = 1}^{\verticesV'}
	 \deg(v'_i) \, x'_i
	.
\]
\end{definition}
This weighted radius of gyration has a surprising connection with the ordinary radius of gyration:

\begin{proposition}
\label{prop:degree rog and rog}
Suppose that $X'$ is an embedding of the graph $\graphG'$ with displacements $W'$. Now let $M' = (m'_1, \dotsc, m'_{\edgesE'})$ be the point cloud consisting of the midpoints of the edges of $\graphG'$, weighted equally. Then
\[
	\mu(M')
	=
	\mu(X',\deg) \quad\text{and}\quad \Rog(M')
	=
	\Rog(X',\deg) - \frac{1}{4 \, \edgesE'} \sum_{i=1}^{\edgesE'\!\!} \norm{w'_i}^2.
\]
\end{proposition}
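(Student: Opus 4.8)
The plan is to reduce everything to the elementary ``handshake'' identity that summing a vertex-indexed quantity over the two endpoints of every edge equals summing it over the vertices, each weighted by its degree. Concretely, for any function $f$ defined on the vertices of $\graphG'$,
\[
	\sum_{i=1}^{\edgesE'} \pars[\big]{ f(v'_{\head(i)}) + f(v'_{\tail(i)}) }
	=
	\sum_{j=1}^{\verticesV'} \deg(v'_j) \, f(v'_j)
	,
\]
since each edge incident to $v'_j$ contributes $f(v'_j)$ once, and a loop (which adds $2$ to $\deg(v'_j)$) contributes it twice. Combined with $\abs{\deg} = 2\edgesE'$ and the midpoint formula $m'_i = \half\pars[\big]{x'_{\head(i)} + x'_{\tail(i)}}$ from~\autoref{def:edge midpoints}, this identity does essentially all the work.

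First I would establish the center of mass identity. Applying the handshake identity with $f = X'$ gives $\sum_{i} m'_i = \half \sum_j \deg(v'_j)\, x'_j$, and dividing by $\edgesE'$ yields $\mu(M') = \frac{1}{2\edgesE'}\sum_j \deg(v'_j)\,x'_j = \mu(X',\deg)$, which is exactly the degree-weighted center of mass.

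For the radius of gyration I would use the third expression~\eqref{eq:DefRog3}. Since we have just shown $\mu(M') = \mu(X',\deg)$, the $-\norm{\mu}^2$ terms in $\Rog(M')$ and $\Rog(X',\deg)$ coincide and cancel in their difference, leaving only the two second-moment terms. The key algebraic input is the midpoint--norm identity $\norm{\half(a+b)}^2 = \half\pars[\big]{\norm{a}^2 + \norm{b}^2} - \tfrac14\norm{a-b}^2$, applied with $a = x'_{\head(i)}$ and $b = x'_{\tail(i)}$ so that $a - b = w'_i$; this gives $\norm{m'_i}^2 = \half\pars[\big]{\norm{x'_{\head(i)}}^2 + \norm{x'_{\tail(i)}}^2} - \tfrac14\norm{w'_i}^2$. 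Summing over $i$ and invoking the handshake identity a second time (now with the squared-norm function $v'_j \mapsto \norm{x'_j}^2$) converts the first summand into $\half\sum_j \deg(v'_j)\,\norm{x'_j}^2$, so that $\frac{1}{\edgesE'}\sum_i \norm{m'_i}^2 = \frac{1}{2\edgesE'}\sum_j \deg(v'_j)\,\norm{x'_j}^2 - \frac{1}{4\edgesE'}\sum_i \norm{w'_i}^2$. Comparing with the degree-weighted second moment then gives the stated formula.

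There is no serious obstacle here: the computation is a direct application of two standard identities. The only point demanding care is the loop bookkeeping in the handshake identity, where a single loop edge must contribute its base vertex twice in order to match the convention that a loop adds $2$ to the vertex degree; getting this right is exactly what makes $\abs{\deg} = 2\edgesE'$ and the two counting steps come out cleanly.
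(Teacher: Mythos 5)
Your proof is correct. For the center-of-mass identity you argue exactly as the paper does: write each midpoint as $\half\pars[\big]{x'_{\head(i)} + x'_{\tail(i)}}$, apply the handshake identity, and divide by $\edgesE'$, with the same loop-counting convention. For the $\Rog$ identity, however, your route is genuinely different and somewhat lighter. The paper works with the pairwise-distance form~\eqref{eq:DefRog1}: it applies Euler's quadrilateral law to every pair of midpoints to expand $\norm{m'_i - m'_j}^2$, and then needs a fourfold Kronecker-delta computation to recognize the resulting double sum $\sum_{k,\ell} \deg(v'_k)\deg(v'_\ell)\norm{x'_k - x'_\ell}^2$ as $8\,(\edgesE')^2\,\Rog(X',\deg)$. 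You instead work with the second-moment form~\eqref{eq:DefRog3}: since your first part gives $\mu(M') = \mu(X',\deg)$, the $-\norm{\mu}^2$ terms cancel in the difference, and the whole identity reduces to the one-edge parallelogram identity $\norm{m'_i}^2 = \half\pars[\big]{\norm{x'_{\head(i)}}^2 + \norm{x'_{\tail(i)}}^2} - \tfrac14\norm{w'_i}^2$ together with a second application of the handshake identity. This replaces the paper's double sum over pairs of edges by single sums over edges, and trades the quadrilateral law for its two-point special case. The one structural difference is that your second part depends on the first (to cancel the means), whereas the paper's computation of $\Rog(M')$ is independent of it; since you prove the center-of-mass identity first, this is harmless. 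Both arguments handle loops and multiple edges by the same degree-counting bookkeeping, which you correctly flag as the only delicate point.
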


\begin{proof}
We start by observing that
\[
	\mu(M')
	=
	\frac{1}{\edgesE'} \sum_{i=1}^{\edgesE'\!\!} m'_i
	=
	\frac{1}{2 \, \edgesE'} \sum_{i=1}^{\edgesE'\!\!} \pars[\big]{ x'_{\head(i)} + x'_{\tail(i)} }
\]
Now each $x'_i$ appears a total of $\deg(v_i)$
times in this sum, either as the head or the tail of an edge. Further, the sum of the vertex degrees of a graph is well-known to be twice the total number of edges (even with loop or multiple edges, each edge contributes $+2$ to the sum of vertex degrees). Therefore,
\[
	\frac{1}{2 \, \edgesE'}
	\sum_{i=1}^{\edgesE'\!\!} \pars[\big]{ x'_{\head(\edge'_i)} + x'_{\tail(\edge')_i} }
	=
	\frac{1}{|\deg|} \sum_{i=1}^{\verticesV'} \deg(v'_i) \, x'_i
	=
	\mu(X',\deg)
	,
\]
proving the first part of the claim.

Now let's consider the second part. To save space, we let $h_i \ceq x'_{\head(i)}$ and $t_i \ceq x'_{\tail(i)}$ for the rest of this proof; note that $m'_i = \frac{1}{2}(h_i + t_i)$. Now $\Rog(M') = \frac{1}{2\,(\edgesE')^2} \sum_{i,j} \norm{m_i' - m_j'}^2$. Further, applying Euler's quadrilateral law~\cite{Kandall2002}, we have for each $i, j$:
\begin{equation}
\begin{aligned}
	\norm{m_i' - m_j'}^2 &=
	\frac{1}{4}
	\pars[\Big]{
		\norm{h_i - h_j}^2 + \norm{t_i - t_j}^2 + \norm{h_i - t_j}^2
		+
		\norm{h_j - t_i}^2} \\
	& \quad -\frac{1}{4} \pars[\Big]{\norm{h_i - t_i}^2 + \norm{h_j - t_j}^2
	}.
\end{aligned}
\label{eq:bimedian formula applied}
\end{equation}
The four positive terms on the right-hand side of~\eqref{eq:bimedian formula applied} are squared distances between vertices of~$X'$. Informally, each squared distance $\norm{x'_k - x'_\ell}^2$ occurs once for each pair of edges $e_i'$ where $\vertex_k'$ is incident to $\edge_i'$ and $\vertex_\ell'$ is incident to $\edge_j'$, and there are $\deg(v'_k) \deg(v'_\ell)$ such pairs. It is easy to check algebraically that this counts multi-edges and loop edges correctly. Using the definitions of $h_i$, $t_i$, $h_j$, and $t_j$, we get
\begin{align*}
	\MoveEqLeft
	\sum_{i=1}^{\edgesE'\!\!}\sum_{j=1}^{\edgesE'\!\!}
	\pars[\Big]{
		\norm{h_i - h_j}^2 + \norm{t_i - t_j}^2 + \norm{h_i - t_j}^2 + \norm{h_j - t_i}^2
	}
	=\\
	&=
	\sum_{i=1}^{\edgesE'\!\!} \sum_{j=1}^{\edgesE'\!\!} \sum_{k=1}^{\edgesE'\!\!} \sum_{\ell=1}^{\edgesE'\!\!}
	\pars[\big]{
		\KronDelta_{k,\head(i)} + \KronDelta_{k,\tail(i)}
	}
	\pars[\big]{
		\KronDelta_{\ell,\head(j)} + \KronDelta_{\ell,\tail(j)}
	}
	\norm{x'_k - x'_{\ell}}^2
	\\
	&=
	\sum_{k=1}^{\edgesE'\!\!} \sum_{\ell=1}^{\edgesE'\!\!}
	\pars[\bigg]{
		\sum_{i=1}^{\edgesE'\!\!} \pars[\big]{ \KronDelta_{k,\head(i)} + \KronDelta_{k,\tail(i)} }
	}
	\pars[\bigg]{
		\sum_{j=1}^{\edgesE'\!\!} \pars[\big]{  \KronDelta_{\ell,\head(j)} + \KronDelta_{\ell,\tail(j)} }
	}
	\norm{x'_k - x'_{\ell}}^2
	\\
	&=
	\sum_{k=1}^{\edgesE'\!\!} \sum_{\ell=1}^{\edgesE'\!\!} \deg(v'_k) \deg(v'_\ell) \, \norm{x'_k - x'_{\ell}}^2
	\\
	&= 2 \pars[\bigg]{\sum_{k=1}^{\edgesE'\!\!} \deg(v'_k)}^2 \Rog(X',\deg) = 8 \, (\edgesE')^2 \Rog(X',\deg)
	,
\end{align*}
where the last line follows from~\eqref{eq:DefRog1} in  \autoref{defn:radius of gyration} and the fact that $\sum_k \deg(v'_k) = 2 \, \edgesE'$. Hence, the four positive terms in~\eqref{eq:bimedian formula applied} contribute the term $\Rog(X',\deg)$ to $\Rog(M')$.

The second term in the expression for $\Rog(M')$ comes from the two negative terms on the right-hand side of~\eqref{eq:bimedian formula applied}, which are squared edgelengths.
Summing them yields 
\[
	-\sum_{i,j} \pars[\Big]{\norm{h_i - t_i}^2 + \norm{h_j - t_j}^2} = -2 \, \edgesE' \sum_{i} \norm{w'_i}^2,
\]
as desired.
\end{proof}

\section{The symmetrization formula}

We are now ready to average $\Rog(X^\sigma)$ over permutations $\sigma \in S$, thereby proving~\autoref{thm:symmetrization formula}. Surprisingly, the result only depends on the embedding $X'$, the number of subdivisions $n$, and the sum of the squares of the lengths of the edges in $X$, and has a relatively compact expression. We will be able to compute this~\emph{without knowing anything else about each $X^\sigma$}.

The claim in \autoref{thm:symmetrization formula} is that
\[
\begin{aligned}
	\frac{1}{\# S} \sum_{\sigma \in S} \Rog(X^\sigma)
	&=
	\Rog\pars[\Big]{X',\deg + \frac{2}{n-1}}
	+
	\frac{ \pars{n+1} \pars{2 \, \verticesV - n}}{12 \, \verticesV^2}
	\norm{W}^2
	-
	\frac{ \pars{n+1} \pars{2 \, \verticesV - 1}}{12 \, \verticesV^2} \norm{W'}^2 ,
\end{aligned}
\]
where $\norm{W}^2 = \sum_{i=1}^\edgesE \sum_{j=1}^{n} \norm{w_{i,j}}^2$ and $\norm{W'}^2 = \sum_{i=1}^{\edgesE'\!\!} \norm{w'_i}^2$.

\begin{proof}[Proof of \autoref{thm:symmetrization formula}]
\autoref{prop:splitting formula for rog} gives us a plan of attack. We have already symmetrized the first and third terms on the right hand side of \autoref{prop:splitting formula for rog}
in \autoref{lem:average radius of gyration of Xi} and \autoref{prop:average squared distance between centers of mass of edge groups}, respectively. The second term is invariant under the action of $S$.

So we now focus on the fourth term. Swapping the order of summation on the left hand side, we see that
\[
	\frac{1}{\# S} \sum_{\sigma \in S} \sum_{i=1}^{\edgesE'\!\!} \norm{\mu(\positiongroup_i^\sigma) - \mu(X')}^2 =
	\sum_{i=1}^{\edgesE'\!\!}
	\pars[\bigg]{
		\frac{1}{\# S} \sum_{\sigma \in S} \norm{
				\mu(\positiongroup_i^\sigma) -  \mu(X')
		}^2
	}
	.
\]
The inner sum is the average squared norm of points in $\{ \mu(\positiongroup_i^\sigma) - \mu(X') \mid \sigma \in S\}$, which is the translation of $\comcloud_i$ by $-\mu(X')$. Using~\eqref{eq:DefRog3} from \autoref{defn:radius of gyration}, we can write this sum in terms of $\Rog(M_i - \mu(X')) = \Rog(M_i)$:
\[
	\frac{1}{\# S} \sum_{\sigma \in S} \norm{\mu(\positiongroup_i^\sigma) - \mu(X')}^2
	=
	\Rog(\comcloud_i - \mu(X'))
	+
	\norm{ \mu(\comcloud_i) - \mu(X') }^2
	=
	\Rog(\comcloud_i)
	+
	\norm{ \mu(\comcloud_i) - \mu(X') }^2
	.
\]
In the proof of~\autoref{prop:average squared distance between centers of mass of edge groups}, we showed that $\Rog(\comcloud_i) = \frac{n^2 \pars{n+1}}{12 \pars{n-1}^2} \Rog(\displacementgroup_i)$.
Applying~\autoref{lem:center of mass of center of mass cloud} shows that $\sum_{i=1}^{\edgesE'\!\!} \norm{\mu(\comcloud_i) - \mu(X')}^2 = \sum_{i=1}^{\edgesE'\!\!} \norm{m'_i - \mu(X')}^2$. As above, we can write this in terms of the radius of gyration of the point cloud $\set{m'_i - \mu(X')}{i \in \braces{1,\dotsc,\edgesE'}}$, which is a translation of the edge midpoint cloud $M'$. Using~\autoref{prop:degree rog and rog}, we get
\begin{equation*}
	\frac{1}{\edgesE'} \sum_{i=1}^{\edgesE'\!\!} \norm{ m'_i - \mu(X') }^2
	=
	\Rog(M') + \norm{\mu(M') - \mu(X')}^2
	=
	\Rog(M') + \norm{\mu(X',\deg) - \mu(X')}^2
	.
\end{equation*}
We have now established that
\[
	\frac{1}{\# S} \sum_{\sigma \in S} \sum_{i=1}^{\edgesE'\!\!} \norm{\mu(\positiongroup_i^\sigma) - \mu(X')}^2
	=
	\edgesE' \pars[\Big]{ \Rog(M') + \norm{\mu(X',\deg) - \mu(X')}^2 }
	+
	\frac{n^2 \pars{n+1}}{12 \pars{n-1}^2} \sum_{i=1}^{\edgesE'\!\!} \Rog(\displacementgroup_i).
\]
We now return to the statement of~\autoref{prop:splitting formula for rog} and symmetrize:
\[
\begin{aligned}
	\frac{1}{\#S} \sum_{\sigma \in S} \Rog(X^\sigma)
	&=
	\frac{n-1}{\verticesV} \sum_{i=1}^{\edgesE'\!\!}  \frac{1}{\# S} \sum_{\sigma \in S} \Rog(\positiongroup^\sigma_i)
	+ \frac{\pars{n-1}^2}{2 \,\verticesV^2}
	\sum_{i = 1}^{\edgesE'\!\!} \sum_{j = 1}^{\edgesE'\!\!}
	\frac{1}{\# S}
	\sum_{\sigma \in S} \norm{\mu(\positiongroup^\sigma_i) - \mu(\positiongroup^\sigma_j)}^2
	+
	\\
	&\qquad
	+ \frac{\verticesV'}{\verticesV} \Rog(X')
	+ \frac{\pars{n-1} \, \verticesV'}{\verticesV^2} \frac{1}{\# S} \sum_{\sigma \in S} \sum_{i=1}^{\edgesE'\!\!} \norm{\mu(\positiongroup^\sigma_i) - \mu(X')}^2
.
\end{aligned}
\]
Using~\autoref{lem:average radius of gyration of Xi},~\autoref{prop:average squared distance between centers of mass of edge groups}, the fact that $\sum_{i \neq j} \norm{m'_i - m'_j}^2 = 2\,(\edgesE') \Rog(M')$, and~\autoref{prop:degree rog and rog}, we can now expand the above and collect terms, observing that $\sum_{i=1}^{\edgesE'\!\!} \Rog(\displacementgroup_i)$ and $\sum_{i=1}^{\edgesE'\!\!} \norm{w'_i}^2$ occur in several terms.
Simplifying very carefully and remembering that $\verticesV = \pars{n-1} \, \edgesE' + \verticesV'$ yields
\begin{equation}
\begin{aligned}
	\frac{1}{\#S} \sum_{\sigma \in S} \Rog(X^\sigma)
	&=
	\frac{n \pars{n+1} \pars{ 2 \, \verticesV-n } }{12 \,\verticesV^2}
	\sum_{i=1}^{\edgesE'\!\!} \Rog(\displacementgroup_i)
	-
	\frac{n^2-1}{6 \, n \, \verticesV} \sum_{i=1}^{\edgesE'\!\!} \norm{w'_i}^2
	\\
	&+
	\frac{\verticesV - \verticesV'}{\verticesV} \Rog(X',\deg) + \frac{\verticesV'}{\verticesV} \Rog(X')
	+
	\frac{\verticesV' \, (\verticesV - \verticesV')}{\verticesV^2} \norm{\mu(X',\deg) - \mu(X')}^2
	.
\end{aligned} \label{eq:intermediate form}
\end{equation}
We will now combine the last three terms on the right-hand side of~\eqref{eq:intermediate form} into a single weighted radius of gyration using~\autoref{lem:splitting formula for Rog}. Recall that multiplying the weight function in any radius of gyration or center of mass formula by a constant has no effect on the result. Therefore, if we define the weight function $\varOmega'_1 : \vertexgroup' \to \R^+$ by $\varOmega'_1(v'_k) = \frac{n-1}{2} \deg(v'_k)$, we have $\Rog(X',\deg) = \Rog(X',\varOmega'_1)$ and $\mu(X',\deg) = \mu(X',\varOmega'_1)$.
Further, 
\[
\abs{\varOmega'_1} = \sum_{k = 1}^{\verticesV'} \frac{\pars{n-1} \deg(v'_k)}{2} = (n-1) \edgesE' = \verticesV - \verticesV'.
\]
On the other hand, if we define $\varOmega'_2 : \vertexgroup' \to \R^+$ by $\varOmega'_2(v'_k) = 1$, we have $\Rog(X',\varOmega'_2) = \Rog(X')$. Further, $\abs{\varOmega'_2} = \verticesV'$, so if $\varOmega' = \varOmega'_1 + \varOmega'_2$, then $\abs{\varOmega'} = \verticesV$. Applying~\autoref{lem:splitting formula for Rog} we then have
\[
\begin{aligned}
&\left(\frac{\verticesV - \verticesV'}{\verticesV} \Rog(X',\deg) + \frac{\verticesV'}{\verticesV} \Rog(X') \right)
+
\frac{\verticesV' \, (\verticesV - \verticesV')}{\verticesV^2} \norm{\mu(X',\deg) - \mu(X')}^2 = \\
&\qquad\qquad= \sum_{i=1}^2 \frac{\abs{\varOmega'_i}}{\abs{\varOmega'}} \Rog(X',\varOmega'_i) + 
\frac{1}{2} \sum_{i=1}^{2} \sum_{j=1}^{2} \frac{\abs{\varOmega'_i} \abs{\varOmega'_j}}{\abs{\varOmega'}^2} \norm{\mu(X',\varOmega'_i) - \mu(X',\varOmega'_j)}^2 \\
&\qquad\qquad= \Rog(X',\varOmega') = \Rog(X',\frac{n-1}{2} \deg + 1) = \Rog(X',\deg + \frac{2}{n-1}).
\end{aligned}
\]
Further, since $w'_i = \sum_{j=1}^n w_{i,j}$, we can use
\[
	\sum_{i=1}^{\edgesE'\!\!} \Rog(\displacementgroup_i)
	=
	\sum_{i=1}^{\edgesE'\!\!}
	\pars[\Bigg]{
		\frac{1}{n} \sum_{j=1}^{n} \norm{ w_{i,j} }^2
		-
		\norm[\bigg]{ \frac{1}{n} \sum_{j=1}^n w_{i,j} }^2
	}
	=
	\frac{1}{n} \sum_{i=1}^{\edgesE'\!\!} \sum_{j=1}^{n} \norm{w_{i,j}}^2 - \frac{1}{n^2} \sum_{i=1}^{\edgesE'\!\!}\norm{ w'_i }^2
\]
to rewrite the first two terms on the right hand side of~\eqref{eq:intermediate form} in terms of $\norm{W}^2$ and $\norm{W'}^2$, producing the claimed formula for average radius of gyration and completing the proof of \autoref{thm:symmetrization formula}.
\end{proof}

\autoref{thm:symmetrization formula} is a generalization of~\cite[Proposition~6.5]{Cantarella2012d}, which covers the special case where $\graphG'$ has one edge joining two vertices.
Translated to the notation used here, it states that
\[
	\frac{1}{\#S} \sum_{\sigma \in s} \Rog( X )
	=
	\frac{n+2}{12 \pars{n+1}} \pars[\bigg]{ \sum_{j=1}^n \norm{w_{1,j}}^2 + \norm{w_1'}^2}
\]
We can reproduce this with our formula from \autoref{thm:symmetrization formula}. Note that
\[
	\Rog\pars[\Big]{ X',\deg + \frac{2}{n-1} }
	= \frac{1}{4} \norm{x_2' - x_1'}^2
	= \frac{1}{4} \norm{w_1'}^2
	.
\]%
Simplifying the statement of~\autoref{thm:symmetrization formula} by using $\verticesV = n +1$, we see that for any collection of edges, the expected radius of gyration is $\frac{n+2}{12 \pars{n+1}} (\norm{W}^2 + \norm{W'}^2)$, as expected.

Moreover, when $x_1' = x_2'$, this reproduces the expected radius of gyration and pairwise edge correlations already derived for various random polygon models (see~\cite[Proposition~7.2 and Corollary~7.3]{Cantarella2012d}, and compare to~\cite[Lemma~2 and Theorem~6]{Zirbel2012} and~\cite[equation (7)]{Grosberg2008}). We also verified the formula numerically for a variety of graphs, and encourage the reader to do the same.


\section*{Acknowledgments}
We are very grateful to many colleagues for helpful discussions, especially Sebastian Caillaut, Azim Sharapov, Tetsuo Deguchi, and Erica Uehara. We thank the National Science Foundation (DMS--2107700 to Shonkwiler) and the Simons Foundation (\#524120 to Cantarella, \#709150 to Shonkwiler) for their support.

\bibliography{mendeley-export,contractionfactors}

\appendix
\section{Proof of~\autoref{lem:splitting formula for Rog}}

\begin{proof}
We let $r_i^2 \ceq \Rog(X,\varOmega_i)$ and $\mu_i \ceq \mu(X,\varOmega_i)$, while $\varOmega_k = \varOmega(x_k)$ and $\varOmega_{i,k} = \varOmega_i(x_k)$. 
\begin{align*}
	\MoveEqLeft
	2 \abs{\varOmega}^2
	\Rog(X,\varOmega)
	=
	\sum_{j=1}^m \sum_{\ell=1}^{n}
	\norm{x_{k} - x_{\ell}}^2 \, \varOmega_{k} \, \varOmega_{\ell}
	=
	\sum_{i=1}^m \sum_{k=1}^{n}  \sum_{j=1}^m \sum_{\ell=1}^{n}
	\norm{x_{k} - x_{\ell}}^2 \, \varOmega_{i,k} \, \varOmega_{j,\ell}
	\\
	&=
	\sum_{i=1}^m \sum_{k=1}^{n}  \sum_{j=1}^m \sum_{\ell=1}^{n}
	\pars[\Big]{
		\norm{x_{k}}^2 - 2\inner{x_{k}, x_{\ell}} + \norm{x_{\ell}}^2
	}\, \varOmega_{i,k} \, \varOmega_{j,\ell}
	\\
	&=
	2 \sum_{j=1}^m \pars[\Bigg]{
		\underset{\abs{\varOmega_j}}{\underbrace{\sum_{\ell=1}^{n_j}  \varOmega_{j,\ell} }}
	}
	\pars[\Bigg]{
		\sum_{i=1}^m
		\underset{
			\abs{\varOmega_i} \, r_i^2
			+
			\abs{\varOmega_i} \norm{\mu_i}^2
		}{\underbrace{
		\sum_{k=1}^{n}
		\norm{x_{k}}^2  \, \varOmega_{i,k}
		}}
	}
	- 2 \sum_{i=1}^m \sum_{j=1}^m
	\inner[\Bigg]{
		\underset{\abs{\varOmega_i} \, \mu_i}{\underbrace{
			\sum_{k=1}^{n}
			x_{k} \, \varOmega_{i,k}
		}}
		,
		\underset{\abs{\varOmega_j} \, \mu_j}{\underbrace{
			\sum_{\ell=1}^{n}
			x_{\ell} \, \varOmega_{j,\ell}
		}}
	}
	\\
	&=
	2 \,  \abs{\varOmega} \sum_{i=1}^m  \abs{\varOmega_i} \, r_i^2
	+
	\sum_{i=1}^m \sum_{j=1}^m
	\abs{\varOmega_i} \abs{\varOmega_j} \, \pars[\big]{
		\norm{\mu_i}^2
		+
		\norm{\mu_j}^2
	}
	- 2 \sum_{i=1}^m \sum_{j=1}^m
	\abs{\varOmega_i} \abs{\varOmega_j } \inner{ x_{k} , x_{\ell} }
	\\
	&=
	2 \,  \abs{\varOmega} \sum_{i=1}^m  \abs{\varOmega_i} \, r_i^2
	+
	\sum_{i=1}^m \sum_{j=1}^m \abs{\varOmega_i} \abs{\varOmega_j} \norm{\mu_i - \mu_j}^2
\end{align*}
\end{proof}

\end{document}